\theoremstyle{plain}
\newtheorem{introthm}{Theorem}
\newtheorem{theorem}{Theorem}[section]
\newtheorem{proposition}[theorem]{Proposition}
\newtheorem{lemma}[theorem]{Lemma}
\newtheorem{corollary}[theorem]{Corollary}
\newtheorem{problem}[theorem]{Problem}
\newtheorem{theorem*}{Theorem}[]
\theoremstyle{definition}
\newtheorem{example}[theorem]{Example}
\theoremstyle{remark}
\newtheorem{remark}[theorem]{Remark}
\newtheorem{remarks}[theorem]{Remarks}
\newcommand{\lin }{\mathcal{H}om }
\def\:{{\colon}}
\def\Q{{\mathbb Q}}
\def\C{{\mathbb C}}
\def\D{{\mathcal{D}}}
\def\G{\mathcal{G}}
\def\B{\mathcal{B}}
\def\CC{{\mathcal C}}
\def\map{\mathrm{map}}
\def\aut{\mathrm{aut}}
\def\Baut{\mathrm{Baut}}
\def\aut{\mathrm{aut}}
\def\der{\mathrm{Der}}
\def\dder{\mathcal{D}\mathrm{er}}
\def\Der{\mathrm{Der}}
\def\Hom{\mathrm{Hom}}
\def\cat0{\mathrm{cat}_0}
\def\ev{\mathrm{ev}}
\def\ad{\mathrm{ad}}
\def\cat{{\mathrm {cat}}}
\def\ev{{\mathrm{ev}}}
\def\id{\mathrm{id}}
\begin{document}

\title[The Classifying Space for Fibrewise Self-Equivalences]
{Rational Homotopy Type  of  the Classifying Space for Fibrewise Self-Equivalences}

\author{Urtzi Buijs } \thanks{U. Buijs was partially supported by the {\em Ministerio de Ciencia e  
Innovaci\'on}  grant MTM2010-15831 and by the
  {\em Junta de Andaluc\'\i a} grant FQM-213.}
\address{Departament d'\`Algebra i Geometria, Universitat de Barcelona.
Gran Via de les Corts Catalanes, 585, 08007 Barcelona, Spain}

\email{ubuijs@ub.edu}

\author{Samuel B. Smith}
\address{Department of Mathematics,
  Saint Joseph's University,
  Philadelphia, PA 19131 U.S.A.}
\email{smith@sju.edu}

\date{\today}

\keywords{Fibre-homotopy equivalences, classifying space, Quillen model,  function space,
Lie  derivation}

\subjclass[2000]{55P62, 55Q15}

\begin{abstract}
Let $p \colon E \to B$ be a fibration of simply connected CW complexes with finite base $B$ and fibre $F$.  Let $\aut_1(p)$ denote
the identity component of the space  of  all  fibre-homotopy self-equivalences of $p$.  Let  $\Baut_1(p)$ denote the classifying space for this topological monoid.
We give a differential graded Lie algebra model for $\Baut_1(p)$,   connecting  recent work in \cite{FLS} and \cite{BFM}.
We use this model to give   classification results for    the rational homotopy types represented by    $\Baut_1(p)$  and  also  to obtain  conditions under which the monoid $\aut_1(p)$ is a double loop-space after rationalization.

  \end{abstract}

\maketitle

\section{Introduction}\label{sec:intro}
We consider  a  variation on the following classical problem for principal bundles.  Fix a space   $B$  and   group $G$.  Given a principal $G$-bundle $p \colon E \to B$ let $\G(p)$ denote the {\em gauge group} consisting  of $G$-equivariant automorphisms of $p$ as studied in  \cite{AB}.  The gauge group classification problem is that of determining the distinct H-homotopy types represented by the   groups $\G(p)$ for fixed $B$ and $G$ (see, for example,  \cite{K, CS, KT}).  Refining this  problem  is that   of determining the homotopy types represented   by the classifying spaces $\mathrm{B}\G(p)$  (see, for example, \cite{CS, KT, Tsu}).    In this connection, we have the    identity   $\ \mathrm{B}\G(p) \simeq \map(B, BG; h)$ where $h \colon B \to BG$ is the classifying map   \cite{Got, AB}.

The complexity of the gauge group classification problem is     strictly a torsion phenomenon.
After rationalization, the gauge groups  are all H-homotopy commutative and pairwise H-homotopy equivalent  when  $B$ is a compact metric space and $G$        is   a connected Lie group \cite[Th.D]{KSS}.    Further,   the  classifying spaces $B\G(p)$    are pairwise  homotopy equivalent   (see Remark  \ref{rems}.3, below).

A  related classification problem with  rich  rational homotopy theory   is obtained by moving to the setting  of  fibrations  $p \colon E \to B$ with  fibre $F.$  Here the analogue of the gauge group is the monoid $\aut_1(p)$ of fibrewise self-equivalences homotopic to the identity.     Passing to  the Dold-Lashof classifying space  \cite{DL}, for $B$ simply connected, we have the identity \begin{equation} \label{eq:Baut}
\Baut_1(p)  \simeq \widetilde{\map}(B, \Baut_1(F); h). \end{equation} Here   $h \colon B \to \Baut_1(F)$ is the classifying map of the fibration $p$ and $\widetilde{\map}$ denotes the universal cover of the function space by \cite{BHMP} (see Proposition \ref{pro:BHMP}, below).    Unlike the gauge groups,    the monoids $\aut_1(p)$  and their classifying spaces $\Baut_1(p$)  display  non-equivalent  rational  homotopy types for fixed $F$ and $B.$    This is true even in the case of principal bundles.  For instance, we have:

\begin{example} \label{example1} The classifying spaces $\Baut_1(p)$ for    principal $SU(n)$-bundles $p$ over $\C P^m$  represent $\mathrm{max}\{1,n-1, m\}$ distinct rational homotopy types.

\end{example}

The monoid $\aut_1(p)$ is     not  often H-homotopy commutative after rationalization.  For example,
  when $p$ is a principal $SU(n)$-bundle over $\C P^{m},$  as above, $\aut_1(p)$   is never H-homotopy commutative after rationalization   for $n \geq 4$. On the other hand, there are a number of interesting cases in which  $\aut_1(p)$ is, in fact,  H-equivalent to a double loop-space  after rationalization, i.e., $\Baut_1(p)$ is a rational H-space.   We mention one here:

\begin{example} \label{example2}  Let  $p \colon E \to B$ be a principal $G$-bundle with decomposable rational characteristic classes where $G$ is a simply  connected Lie group and $B$ is  a finite, simply connected CW complex.    Let   $J   = \mathrm{Hur}_E \circ j_\sharp \colon \pi_*(G) \to H_*(E; \Q)$ where
$j \colon G \to E$ is the fibre inclusion,    $j_\sharp \colon \pi_*(G) \to \pi_*(E) $  is  the induced map and  $\mathrm{Hur}_E \colon \pi_*(E)  \to H_*(E; \Q)$ is the rational Hurewicz map.   If $E$ is formal and  $J= 0$ then   $\Baut_1(p)$  is a rational H-space.
\end{example}

We deduce Examples \ref{example1} and \ref{example2} and related results as a consequence of   a differential graded (DG) Lie algebra model  or {\em Quillen model} for the space $\Baut_1(p)$.    Our starting point  is  the description of   the rational Samelson Lie algebra of $\aut_1(p)$ given in \cite{FLS} which, in turn,
extends the famous model for $\Baut_1(F)$ described by Sullivan \cite{Su}.    Let $p \: E \to B$  be a fibration of  simply connected CW complexes.         Then $p$  admits a relative minimal model which is an injection of DG
algebras $I \: \B,d_B \to \B \otimes \Lambda W,D$    (see \cite[Pro.15.6]{FHT}).
Let  $\der^n_{\B}(\B \otimes \Lambda W )$ denote  the vector  space of all linear  self maps $\theta$ of $\B \otimes \Lambda W$ reducing degrees by $n$ and satisfying $\theta(ab)= \theta(a)b - (-1)^{n|a|}a\theta(b)$ and  $\theta(\B)=0.$  The graded space  $\der_{\B}(\B \otimes \Lambda W )$  then has  the structure
of a DG Lie algebra with the commutator bracket $[\theta_1, \theta_2]
= \theta_1 \circ \theta_2 - (-1)^{|\theta_1||\theta_2|}\theta_2
\circ \theta_1$ and differential $\D(\theta) = [D, \theta].$
When $E$ and $B$ are finite,  \cite[Th.1]{FLS} gives an isomorphism of graded Lie algebras
\begin{equation} \label{eq:Samelson} \pi_*(\aut_1(p) ) \otimes  \Q,  \, \, [ \, , \, ]  \cong \, H_*(\der_{\B}(\B \otimes \Lambda W)), \, \,  [ \, , \,]
\end{equation}
where the former space has the Samelson bracket. 
We refine  the above isomorphism  with our main result. 

Define the connected DG Lie algebra $\dder_{\B}(\B \otimes \Lambda W ), \D$ by:  $$\dder^n_{\B}(\B \otimes \Lambda W )= \left\lbrace
           \begin{array}{c l}
              \der^n_{\B}(\B \otimes \Lambda W ) & \text{for $n\geq 2$},\\
              Z\der^1_{\B}(\B \otimes \Lambda W ) & \text{for $n=1$}.
           \end{array}
         \right.$$
Here $Z$ means the space of
 cycles.  The bracket and differential $\D$ are those described above.

\begin{introthm} \label{thm:main}
Let $p \colon E \to B$ be a fibration of  simply connected finite type CW complexes with fibre $F$.  Suppose $B$ and $F$ are   finite.  Then
$\dder_{\B}(\B \otimes \Lambda W), \D$ is a  Quillen  model for $\Baut_1(p).$
\end{introthm}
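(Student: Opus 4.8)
The plan is to realize $\Baut_1(p)$ as the universal cover of a function space component via \eqref{eq:Baut}, to recognize $\der_{\B}(\B \otimes \Lambda W)$ as the Lie model of that component supplied by \cite{BFM}, and to produce $\dder_{\B}$ by the truncation that corresponds to the universal cover. First I would invoke Proposition \ref{pro:BHMP}, giving $\Baut_1(p) \simeq \widetilde{\map}(B, \Baut_1(F); h)$, so that it suffices to model the universal cover of the path component of the classifying map $h$ in $\map(B, \Baut_1(F))$. For the target I would use the derivation algebra $\der(\Lambda W)$ underlying Sullivan's model \cite{Su} of $\Baut_1(F)$, which is the $B = *$ instance of the present theorem. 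Feeding this into the function space Lie model of \cite{BFM}---legitimate since $B$ is finite---the component of $h$ is modeled by the graded tensor product $\B \otimes \der(\Lambda W)$, with $\B^k \otimes \der^j$ placed in degree $j-k$, carrying a differential perturbed by a Maurer-Cartan element $z_h$ that represents the homotopy class of $h$.

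The heart of the argument is to identify this abstract model with the derivation algebra. Since a $\B$-linear derivation vanishing on $\B$ is determined by its restriction $W \to \B \otimes \Lambda W$, there is a natural isomorphism of graded Lie algebras $\der_{\B}(\B \otimes \Lambda W) \cong \B \otimes \der(\Lambda W)$, under which $\der_{\B}^n = \bigoplus_k \B^k \otimes \der^{n+k}(\Lambda W)$; it then remains to match differentials. Writing the relative model differential as $D = d_B \otimes 1 + 1 \otimes d + D'$, where $D'$ is the twisting term that encodes $h$, the operator $\D = [D,-]$ decomposes into the internal differential of $\B$, the derivation differential $[d,-]$ of $\der(\Lambda W)$, and the term $[D',-]$; the key point is to verify that $[D',-]$ is precisely the perturbation by $z_h$. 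This matching of twisted differentials, together with the bookkeeping of signs in the commutator bracket, is the step I expect to be the main obstacle.

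Finally, I would account for the universal cover: passing from $\map(B,\Baut_1(F);h)$ to $\widetilde{\map}(B,\Baut_1(F);h)$ corresponds on the Lie side to the good truncation $\tau_{\geq 1}$ of the total complex, which annihilates $H_0$ while leaving every $H_n$ with $n \geq 1$ unchanged. Setting $\dder^{\leq 0} = 0$ and $\dder^1 = Z\der^1$ realizes exactly this truncation applied to $\der_{\B}$: every element of $\dder^1$ is then a cycle, so $H_1(\dder_{\B}) = Z\der^1 / \D(\der^2) = H_1(\der_{\B})$, while $H_0(\dder_{\B}) = 0$ and $H_n(\dder_{\B}) = H_n(\der_{\B})$ for $n \geq 2$; moreover $\dder_{\B}$ is a sub-DG-Lie-algebra of $\der_{\B}$ because $\D(\der^2) \subseteq Z\der^1$ and brackets of positive-degree derivations lie in degree $\geq 2$. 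This exhibits $\dder_{\B}(\B \otimes \Lambda W)$ as a Quillen model for $\Baut_1(p)$, and its homology recovers the Samelson Lie algebra \eqref{eq:Samelson} of \cite{FLS} as a consistency check.
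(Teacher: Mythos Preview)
Your proposal is correct and follows essentially the same route as the paper: invoke Proposition~\ref{pro:BHMP}, feed Sullivan's derivation model for $\Baut_1(F)$ into the Buijs--F\'elix--Murillo function-space model, and then identify that model with the relative derivation algebra---the paper writes the BFM model as $\lin(\B^\sharp,L)$ rather than $\B\otimes L$ (equivalent for finite-dimensional $\B$) and absorbs the universal-cover truncation into the cited form of Proposition~\ref{pro:BFM}, but the substance is identical. The step you flag as the main obstacle, matching the twisting $D'$ with the Maurer--Cartan perturbation and checking that brackets and differentials agree, is exactly what the paper carries out in Lemma~\ref{iso} and Theorem~\ref{thm:quasi}.
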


The paper is organized as follows.   In Section \ref{proof},   we prove Theorem \ref{thm:main}.
Our proof    is based on the main result of \cite{BFM} which gives a Quillen model for function space components.    We  construct an explicit   isomorphism between the  DG Lie algebra $\dder_\B(\B \otimes \Lambda W), \D$ and this model  for  the   component $\map(B,   \Baut_1(F); h)$. We then invoke the identity (\ref{eq:Baut}).
 In   Section \ref{problems},  we  apply Theorem \ref{thm:main} to deduce a number of applications    along the lines  of Examples \ref{example1} and \ref{example2}.  We assume familiarity with rational homotopy theory as in the text   \cite{FHT}.

\section{A Quillen Model for $\Baut_1(p)$} \label{proof}
Fix a fibration  $p \colon E \to B$ with fibre $F$  as in Theorem \ref{thm:main}.   That is, all spaces are simply connected, finite type CW complexes with $B$ and $F$   finite.    Define  $\aut_1(p)$ to be  the path-component   of the identity map in  the  fibrewise function space $\map_B(E, E)$ consisting of    maps $f \colon E \to E$  satisfying $p \circ f = p.$ Here $\map_B(E, E)$ is topologized as a subspace of $\map(E, E).$  Then, with our hypotheses,    $\aut_1(p)$ is a strictly associative, homotopy inversive  H-space of CW type (see \cite[Pro.2.3]{FLS}) and so admits a Dold-Lashof classifying space $\Baut_1(p)$ \cite{DL}.   The space $\Baut_1(p)$ is, in turn,  a
simply connected CW complex and so admits a rationalization $ \Baut_1(p)_\Q.$
Our main result, Theorem \ref{thm:main}, determines the homotopy type of $ \Baut_1(p)_\Q.$

Our proof of Theorem \ref{thm:main}  depends on three results which we discuss now.   First off, we deduce the identity (\ref{eq:Baut}) as   a consequence of the main result of \cite{BHMP}.   The fibration $p$ above is induced from the universal $F$-fibration by a map $f \colon B \to \Baut(F)$ (cf. \cite{May}). Since $B$ is simply connected, $f$ lifts to a map $h \colon B \to \Baut_1(F)$.  We refer to $h$ as the {\em classifying map} for $p$. 
\begin{proposition} \label{pro:BHMP}  \emph{(Booth-Heath-Morgan-Piccinni)} Let $p \colon E \to B$ be a fibration with fibre $F$ as hypothesized in Theorem \ref{thm:main}.  Then there is a homotopy equivalence:
$$
\Baut_1(p)  \simeq \widetilde{\map}(B, \Baut_1(F); h).$$
\end{proposition}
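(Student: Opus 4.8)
The plan is to identify the two sides of the asserted equivalence by tracing through the universal property of the Dold-Lashof classifying space and then invoking the Booth-Heath-Morgan-Piccinni description of fibrewise mapping spaces. First I would recall the universal $F$-fibration $q \colon UE \to \Baut(F)$ and the fact that fibrations with fibre $F$ over a base $B$ are classified, up to fibre-homotopy equivalence, by homotopy classes of maps $B \to \Baut(F)$; the fibration $p$ corresponds to the chosen classifying map, which (since $B$ is simply connected) factors through a map $h \colon B \to \Baut_1(F)$. The key structural input is that the monoid $\aut_1(p)$ of fibrewise self-equivalences of $p$ sitting over a fixed base should be recognized as the monoid of self-equivalences of the \emph{lift} $h$, so that its classifying space $\Baut_1(p)$ becomes a classifying space for lifts of the constant classifying data up to fibrewise equivalence.

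The central step is the translation of this classification statement into a statement about a function space. The Dold-Lashof construction gives a homotopy equivalence $\Baut_1(p) \simeq \map(B, \Baut_1(F); h)$ at the level of unbased, unrestricted components: a fibrewise self-equivalence of $p$ over $B$ corresponds to a vertical homotopy of classifying maps, and the bar construction on $\aut_1(p)$ realizes precisely the component of the function space $\map(B, \Baut_1(F))$ containing $h$. Here one must be careful that $\map$ denotes the full (free) mapping space, so that the relevant component is the path-component of the classifying map $h$, and that the monoid $\aut_1(p)$ is the one of self-equivalences \emph{homotopic to the identity}, which is exactly what produces the identity-component matching on the function-space side.

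The remaining step is the passage from the free function-space component to its universal cover, which accounts for the tilde $\widetilde{\map}$. Since $B$ is a finite simply connected complex and $\Baut_1(F)$ is simply connected, the component $\map(B, \Baut_1(F); h)$ may itself fail to be simply connected; its fundamental group is governed by $[\Sigma B_+, \Baut_1(F)]$-type data, equivalently by the image of $\pi_1$ of the free loop directions. The Dold-Lashof classifying space $\Baut_1(p)$, on the other hand, is simply connected, because $\aut_1(p)$ is path-connected by construction; hence $\Baut_1(p)$ must correspond not to the full component but to its universal cover. The main obstacle I anticipate is precisely this fundamental-group bookkeeping: one has to verify that the simple connectivity of $\Baut_1(p)$ forces the universal-cover correction on the function-space side, rather than some intermediate cover, and that the two spaces agree on all higher homotopy. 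This is exactly the content extracted from \cite{BHMP}, so the cleanest route is to cite their theorem for the function-space model and then argue that connectivity of $\aut_1(p)$ pins down the cover, rather than to reprove the homotopy equivalence from scratch.
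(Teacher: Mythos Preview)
Your central step contains a genuine error that makes the argument self-contradictory. You assert that the Dold--Lashof construction gives
\[
\Baut_1(p) \simeq \map(B,\Baut_1(F);h),
\]
and then in the next paragraph you observe that the left-hand side is simply connected while the right-hand side need not be. Both of those observations cannot hold simultaneously: if the spaces were homotopy equivalent, their fundamental groups would agree. The mistake is in which classifying space the BHMP theorem produces. The result \cite[Th.3.3]{BHMP} gives an equivalence for the \emph{full} monoid of fibrewise self-equivalences,
\[
\Baut(p) \simeq \map(B,\Baut(F);f),
\]
not for the identity component $\aut_1(p)$. Restricting to the path-component of $h$ on the function-space side does \emph{not} correspond to passing to the identity component of $\aut(p)$; that restriction is already built into the BHMP statement. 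Rather, $\pi_0(\aut(p)) \cong \pi_1\bigl(\map(B,\Baut(F);f)\bigr)$, so passing from $\aut(p)$ to $\aut_1(p)$ corresponds to killing $\pi_1$ on the classifying-space side, i.e.\ to taking the universal cover.

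The paper's proof follows exactly this route: start from $\Baut(p)\simeq\map(B,\Baut(F);f)$, use simple connectivity of $B$ and the covering $\Baut_1(F)\to\Baut(F)$ to replace the target by $\Baut_1(F)$, and only then take universal covers of both sides, using that $\Baut_1(p)$ is the universal cover of $\Baut(p)$. Your proposal has the right ingredients (BHMP plus a covering-space correction), but you apply them in the wrong order and to the wrong object; once you put $\Baut(p)$ rather than $\Baut_1(p)$ in the intermediate equivalence, the universal-cover step goes through cleanly and there is nothing further to check about ``which cover'' appears.
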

 \begin{proof}
  By \cite[Th.3.3]{BHMP}, there is a homotopy equivalence $$
\Baut(p)  \simeq  \map(B, \Baut(F); f).$$  Since $B$ is simply connected,  the inclusion $\aut_1(F) \hookrightarrow \aut(F)$ induces a homotopy equivalence $\map(B, \Baut_1(F); h) \simeq \map(B, \Baut(F); f).$  Thus $$\Baut(p)  \simeq  \map(B, \Baut_1(F); h).$$ Taking universal covers gives the desired result.   
 \end{proof}
 
 The second two results we need are in rational homotopy theory. We say a DG Lie algebra  $L, \delta$ is a {\em Quillen model} for a simply connected CW complex  $X$ if there is a quasi-isomorphism
$\psi \colon \CC^*(L, \delta) \to A_{\mathrm{PL}}(X; \Q)$.    Here $A_{\mathrm{PL}}(X; \Q)$ is the DG algebra of   rational polynomial forms on $X$ \cite[Sec.10]{FHT}.  The DG algebra $\CC^*(L, \delta)$ is the {\em commutative cochain algebra} for $L, \delta$ (see \cite[Sec.23]{FHT}).  
We recall, as graded   algebra  $\CC^*(L, \delta) = \Lambda (s^{-1}L^{\sharp}),$    the free graded algebra generated by the desuspension of the dual   $L^\sharp = \Hom(L, \Q)$ of $L$.  The differential $d$ is given by $d= d_1 + d_2$ where
$$ \langle d_1s^{-1}z; sx\rangle = - \langle z, \delta x \rangle \ \ \hbox{\ \ and \ \ } \ \ \langle d_2s^{-1}z; sx_1, sx_2 \rangle = (-1)^{|x_1|} \langle z, [x_1, x_2] \rangle.$$
Here $z \in L^\sharp$ and $ x, x_1, x_2 \in L.$

\begin{proposition}{\em (Sullivan)} \label{pro:Sull} Let $F$ be a finite, simply connected CW complex. 
Let $\Lambda W , d $ be the Sullivan minimal   model for $F$. The DG Lie
algebra $\dder(\Lambda W ), \D$ is then a Quillen model for
$\Baut_1(F)$.\hfill $\square $
\end{proposition}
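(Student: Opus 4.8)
The plan is to prove Proposition~\ref{pro:Sull} by identifying $\Baut_1(F)$ rationally with a function-space component and then recognizing $\dder(\Lambda W),\D$ as the Quillen model of that component. Concretely, I would take $F$ finite and simply connected with Sullivan minimal model $\Lambda W, d$. Applying Proposition~\ref{pro:BHMP} in the degenerate case $B = \mathrm{pt}$ (or, more honestly, invoking the isomorphism~(\ref{eq:Samelson}) with $B$ a point, so that $\der_\B(\B\otimes\Lambda W)$ collapses to $\der(\Lambda W)$), I would recover the classical fact that $\pi_*(\aut_1(F))\otimes\Q \cong H_*(\der(\Lambda W),\D)$ as graded Lie algebras with the Samelson bracket. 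Since $\Baut_1(F)$ is the classifying space of the monoid $\aut_1(F)$, its rational homotopy Lie algebra is the same graded Lie algebra shifted up one degree, i.e.\ $\pi_{n+1}(\Baut_1(F))\otimes\Q \cong \pi_n(\aut_1(F))\otimes\Q$, with the Whitehead product on $\Baut_1(F)$ corresponding to the Samelson product on $\aut_1(F)$.

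The key computation is then to check that the truncation in the definition of $\dder$ produces exactly this degree shift. First I would observe that in positive degrees $n\geq 2$ the homology of $\dder(\Lambda W)$ agrees with that of $\der(\Lambda W)$, while in degree $1$ replacing $\der^1$ by the cycles $Z\der^1$ and leaving $\der^0$ untouched has the effect of killing the spurious degree-$0$ homotopy (the components coming from $\pi_0$) and retaining $H_1$ correctly. The point is that $\Baut_1(F)$ is simply connected with $\pi_2(\Baut_1(F))\otimes\Q \cong \pi_1(\aut_1(F))\otimes\Q = H_1(\der(\Lambda W))$, so the connected DG Lie algebra $\dder(\Lambda W)$ must have $H_0 = 0$ and $H_n = \pi_{n+1}(\Baut_1(F))\otimes\Q$. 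I would verify that $\dder(\Lambda W)$ realizes precisely this: the cycle condition in degree $1$ removes the boundaries that would otherwise contribute to $H_0$, so that $\dder(\Lambda W)$ is connected and its homology is concentrated in the correct range.

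Having matched the homotopy groups and Lie-algebra structure, I would conclude by appealing to the standard fact (from \cite[Sec.23]{FHT}) that a connected DG Lie algebra $L,\delta$ with $H_n(L) \cong \pi_{n+1}(X)\otimes\Q$ as graded Lie algebras, via the Whitehead/Samelson correspondence, is a Quillen model for $X$; that is, the commutative cochain algebra $\CC^*(\dder(\Lambda W),\D)$ admits a quasi-isomorphism to $A_{\mathrm{PL}}(\Baut_1(F);\Q)$. The cleanest route is to use that the homology Lie algebra of a Quillen model computes the rational homotopy Lie algebra, so matching $H_*(\dder(\Lambda W),\D)$ with $\pi_*(\Omega\Baut_1(F))\otimes\Q$ as graded Lie algebras suffices once one knows the spaces involved are rational $H_0$-connected of finite type.

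The main obstacle I expect is the degree-$1$ bookkeeping: one must be careful that the passage from the Samelson Lie algebra of $\aut_1(F)$ to a Quillen model of $\Baut_1(F)$ shifts degrees correctly and that the truncation $Z\der^1$ (rather than all of $\der^1$) is exactly what is needed to make $\dder(\Lambda W)$ a \emph{connected} model whose homology vanishes in degree $0$. This is precisely the subtlety that reappears, in the relative setting, in the definition of $\dder_\B(\B\otimes\Lambda W)$ in Theorem~\ref{thm:main}, so getting the absolute case right is both the heart of the argument and the template for the general result.
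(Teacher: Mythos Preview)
Your proposal has a genuine gap in its final step. You claim as a ``standard fact'' that a connected DG Lie algebra $L,\delta$ whose homology agrees with $\pi_{*+1}(X)\otimes\Q$ as a graded Lie algebra is automatically a Quillen model for $X$. This is false. The homology Lie algebra of a Quillen model does recover the rational homotopy Lie algebra, but the converse fails: two non-quasi-isomorphic DG Lie algebras can have isomorphic homology Lie algebras, and correspondingly two rationally inequivalent spaces can share the same Whitehead/Samelson Lie algebra. Spaces for which the homotopy Lie algebra determines the rational homotopy type are precisely the \emph{coformal} ones, and $\Baut_1(F)$ is not coformal in general (see, e.g., Example~\ref{C*} in the paper, where $\Baut_1(p)$ for the trivial fibration is not even formal). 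So matching $H_*(\dder(\Lambda W),\D)$ with $\pi_*(\aut_1(F))\otimes\Q$ via~(\ref{eq:Samelson}) is necessary but not sufficient; you still owe a quasi-isomorphism at the DG level, and nothing in your outline produces one.

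The paper's proof avoids this issue entirely by citing two results that work at the level of DG Lie algebras rather than their homology: Tanr\'e \cite[Cor.~7.4.4]{Ta} constructs an explicit Quillen model $\mathrm{cl}(\mathcal{L}^*((\Lambda W)^\sharp))$ for $\Baut_1(F)$, and Gatsinzi \cite[Th.~2]{Gat} builds a genuine DG Lie algebra quasi-isomorphism from $\dder(\Lambda W),\D$ to Tanr\'e's model. Both inputs are substantially deeper than a homology comparison; reproducing them is essentially what a self-contained proof of Proposition~\ref{pro:Sull} would require. There is also a mild circularity risk in your plan: the isomorphism~(\ref{eq:Samelson}) from \cite{FLS} that you invoke is itself an extension of Sullivan's theorem, so using it to prove Proposition~\ref{pro:Sull} is at best uninformative.
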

\begin{proof}    Corollary 7.4.4 of \cite{Ta} gives   a Quillen model,  for $\Baut_1(F).$ By \cite[Th.2]{Gat}, 
this model is quasi-isomorphic to $\dder(\Lambda W ), \D.$
\end{proof}

The third result we need is a main result of \cite{BFM}. Let $h\colon B \to Y$ be a map of simply connected CW complexes with $B$
finite.  We say $\B, d_B$ is a {\em DG algebra model} for $B$ if there is a quasi-isomorphism  from the Sullivan minimal model of $B$ to   $\B, d_B.$  Since $B$ is finite, $B$ admits a  finite-dimensional DG algebra model $\B, d_B$.  Let $L, \delta$ be a Quillen model for
$Y$. Then we can choose a  DG algebra morphism $\phi \colon
\mathcal{C}^*(L, \delta)\to \B, d_B$ modeling   $h$. Let $\CC_*(L, \delta)$ and $\B^\sharp, d_B^\sharp$ denote the DG coalgebras dual to $\CC^*(L, \delta)$ and $\B, d_B$, respectively. 
Let  $\tilde{\phi }$ denote the degree $-1$ linear map given by the composition: 
\begin{equation} \label{eq:phi} \xymatrix{\tilde{\phi }\colon \B^\sharp \ar[r]^-{\phi^\sharp } & \mathcal{C}_*(L, \delta)=\Lambda sL\ar[r]^{\ \ \ \ \ \  \xi } & sL \ar[r]^{s}  &L} \end{equation}
where $\xi$ is the projection onto the indecomposables and $s$ is the suspension isomorphism.   

Next, let $\Hom^n(\B^\sharp ,L)$ denote  the space of linear maps lowering degrees by $n$ and $\Hom(\B^\sharp, L)$ the corresponding graded vector space. 
We equip this graded space with  a bracket $$\{ \theta_1, \theta_2 \}=[ \, , \,]\circ (\theta_1 \otimes \theta_2)\circ \Delta, $$ where $\Delta$ is the diagonal map and $[ \, , \, ]$ is the bracket in $L$. 
We also have the perturbed differential:
$$D_\phi \theta=  \delta \theta -(-1)^{|\theta|}\theta d_B^\sharp + \{\tilde{\phi },\theta\}.$$
Truncating as above,  gives  a  connected DG  Lie algebra
$\lin(\B^\sharp,L), D_\phi$ defined by
$$\lin^n(\B^\sharp ,L)= \left\lbrace
           \begin{array}{c l}
              \Hom^n(\B^\sharp,L) & \text{for $n>1$},\\
              Z\Hom^1(\B^\sharp ,L),& \text{for $n=1$}.
           \end{array}
         \right.$$
We have: 

 \begin{proposition} {\em \cite[Th.10]{BFM} and \cite[Th.7]{BFM2}}    \label{pro:BFM}
Let $B$ and $Y$  be simply connected CW complexes with $B$ finite. Let $\B, d_B$  and $L, \delta$ be a finite-dimensional DG algebra model for $B$ and a Quillen model for $Y$, respectively. Then the DG Lie algebra  $\lin(\B^\sharp, L), D_\phi$  
is a Quillen model for $\widetilde{\map }(B,Y;h).$    \qed \end{proposition}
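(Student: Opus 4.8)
The plan is to realize $\lin(\B^\sharp, L), D_\phi$ as the connected truncation of a \emph{convolution} DG Lie algebra and to match its commutative cochain algebra against an established Sullivan model of the function space. The first step is to verify that the untruncated graded space $\Hom(\B^\sharp, L)$, equipped with the bracket $\{\,,\,\}$ and the differential $D\theta = \delta\theta - (-1)^{|\theta|}\theta d_B^\sharp$, is a DG Lie algebra. Here the coproduct $\Delta$ on $\B^\sharp$ is dual to the commutative, associative product of $\B$, so that $\B^\sharp$ is a cocommutative coassociative DG coalgebra. The Jacobi identity and the derivation property of $D$ with respect to $\{\,,\,\}$ then follow formally from coassociativity and cocommutativity of $\Delta$ together with the Jacobi identity and the Leibniz rule in $L$. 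These are the standard convolution-algebra verifications, requiring only care with the Koszul signs.

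Next I would identify $\tilde\phi$ as a Maurer--Cartan element, that is, $D\tilde\phi + \tfrac12\{\tilde\phi, \tilde\phi\} = 0$, a degree $-2$ identity consistent with $|\tilde\phi| = -1$. Since $\CC_*(L,\delta) = \Lambda sL$ is the Chevalley--Eilenberg (bar) coalgebra of $L$, a DG coalgebra morphism $\phi^\sharp\colon \B^\sharp \to \Lambda sL$ is, by the classical twisting-cochain correspondence, the same datum as a Maurer--Cartan element of $\Hom(\B^\sharp, L)$, namely its corestriction to $sL$ followed by desuspension, which is exactly $\tilde\phi$ as defined in \eqref{eq:phi}. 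Because $\phi$ models $h$, this $\tilde\phi$ is the convolution counterpart of the map $h$. Granting the Maurer--Cartan equation, twisting the differential by $\tilde\phi$ produces $D_\phi\theta = D\theta + \{\tilde\phi, \theta\}$ with $D_\phi^2 = 0$, so $\Hom(\B^\sharp, L), D_\phi$ is again a DG Lie algebra, the convolution algebra based at $h$.

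The central step is to show that this twisted convolution algebra models the component $\map(B, Y; h)$, and I expect this to be the main obstacle. I would compute its commutative cochain algebra: using finite-dimensionality of $\B$, one has $\Hom(\B^\sharp, L)^\sharp \cong \B^\sharp \otimes L^\sharp$, so that $\CC^*(\Hom(\B^\sharp, L), D_\phi)$ is free on $s^{-1}(\B^\sharp\otimes L^\sharp)$, a CDGA built from $\B^\sharp$ and the generators $V = s^{-1}L^\sharp$ of the minimal model of $Y$, i.e.\ on $\B^\sharp \otimes V$ up to the suspension bookkeeping. Dualizing the bracket $\{\,,\,\}$, the internal differential $D$, and the twisting term $\{\tilde\phi, -\}$ term by term, I expect to recover precisely the Haefliger/Brown--Szczarba CDGA for $\map(B,Y)$ together with the perturbation that selects the path-component of $h$. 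Matching the dualized convolution differential against the Brown--Szczarba differential is delicate: it requires tracking all the suspensions, the coalgebra structure of $\B^\sharp$, and the twisting contributions, and it is here that finite-dimensionality of $\B$ and the precise sign conventions become indispensable. Invoking the Brown--Szczarba theorem that this CDGA is a Sullivan model for $\map(B,Y;h)$ then exhibits $\Hom(\B^\sharp, L), D_\phi$ as a Lie model for that component.

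Finally I would pass to the universal cover. The truncation defining $\lin(\B^\sharp, L)$ discards the summands in degrees $\le 0$ and replaces degree $1$ by its cycles $Z\Hom^1$, and one checks that the sole effect of this operation on homology is to annihilate $H_0(\Hom(\B^\sharp,L), D_\phi)$ while preserving every $H_n$ for $n \ge 1$ (indeed $H_1(\lin) = Z\Hom^1/D_\phi(\Hom^2) = H_1(\Hom)$). Since $H_0$ governs the rational $\pi_1$ of the component and $H_n$ governs $\pi_{n+1}$, this is exactly the passage from $\map(B,Y;h)$ to its universal cover $\widetilde{\map}(B,Y;h)$: a connected DG Lie algebra realizes to a simply connected space, and the inclusion $\lin(\B^\sharp,L) \hookrightarrow \Hom(\B^\sharp,L)$ models the covering projection. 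Combining this with the comparison of the previous step yields that $\lin(\B^\sharp, L), D_\phi$ is a Quillen model for $\widetilde{\map}(B,Y;h)$, as claimed.
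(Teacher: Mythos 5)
The paper offers no internal proof of this proposition: the \qed\ in the statement marks it as imported verbatim from \cite[Th.10]{BFM} and \cite[Th.7]{BFM2}, so the only ``proof'' here is the citation, and your attempt must be measured against the arguments in those references. Against that yardstick, your outline follows essentially the correct route, repackaged in modern language: the untwisted convolution DG Lie algebra $\Hom(\B^\sharp,L)$, the identification of $\tilde\phi$ with a Maurer--Cartan element via the twisting-cochain correspondence for the Chevalley--Eilenberg coalgebra $\CC_*(L)=\Lambda sL$ (this is exactly why the paper's $D_\phi$ squares to zero), the comparison of $\CC^*$ of the twisted algebra with the Haefliger/Brown--Szczarba model, and the truncation for the universal cover. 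This is in substance what \cite{BFM} does, where the Lie model is extracted by explicitly dualizing the Haefliger--Brown--Szczarba CDGA $\Lambda(V\otimes \B^\sharp)$ for the section/mapping space and perturbing at the Maurer--Cartan point corresponding to $\phi$.

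That said, as a proof your proposal has a genuine gap at exactly the point you flag: the ``central step'' --- that the dual of $\Hom(\B^\sharp,L),D_\phi$ recovers the Brown--Szczarba CDGA together with the perturbation selecting the component of $h$ --- is announced but not executed, and it is the entire technical content of \cite[Th.10]{BFM}; note also that applying $\CC^*$ to the full, unbounded convolution algebra leaves the category of connected Sullivan algebras, so the comparison must really be made with the component model obtained by the Maurer--Cartan quotient, which is where the bookkeeping lives. Two further points need more than you give them. First, the universal-cover step requires knowing that $\map(B,Y;h)$ is a nilpotent space (this is where $B$ finite and $Y$ simply connected enter, via \cite{HMR}), since otherwise ``universal cover'' and rationalization do not interact well. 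Second, your verification that the truncation $\lin(\B^\sharp,L)\hookrightarrow\Hom(\B^\sharp,L)$ has the right effect is purely homological; a Quillen model must also capture the Lie structure (Whitehead/Samelson products), so one needs the model-level statement that the connected truncation models the cover --- this is the content of \cite[Th.7]{BFM2}, not a consequence of comparing $H_n$ alone. In short: right strategy, faithfully reconstructing the cited proof's architecture, but the heart of the argument is deferred rather than supplied.
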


Returning to the    fibration   $p \: E \to B$   with hypotheses as in Theorem \ref{thm:main},    let $I \: \B,d_B \to  \B \otimes \Lambda W,D$ be a  relative minimal model.    Our main result in this section is an   isomorphism of DG Lie algebras:
$$ \Psi \colon \dder_\B(\B \otimes \Lambda W), \D \to \lin(\B^\sharp,  \lin(W, \Lambda W)), D_\varphi.$$
Here $$\varphi \colon \CC^*(\dder(\Lambda W), \D) \to \B, d_B$$ is a model for the classifying map $h \colon B \to \Baut_1(F)$.  

We define  $\Psi$ to be  
  the restriction to ``universal covers" of an isomorphism of graded spaces
  $$ \Phi \colon \Hom (W
,\B\otimes \Lambda W )\stackrel{\cong}{\longrightarrow }\Hom (\B^\sharp
, \Hom (W ,\Lambda W ))$$ given by \begin{equation} \label{eq:Phi} \Phi (\theta )(\beta )(w
)=(-1)^{|\theta ||\beta |}(\beta \otimes \id_{\Lambda W })\theta(w). \end{equation} Notice that, as graded spaces, $$\der_{\B}(\B \otimes \Lambda W) = \Hom(W, \B\otimes \Lambda W)$$  and
$$  \Hom(\B^\sharp, \Der(\Lambda W)) = \Hom (\B^\sharp
, \Hom (W ,\Lambda W ))$$
with the same identities holding for universal covers.  

An   alternative description of $\Phi$ is the following: Let $\theta \in \Hom(W, B\otimes \Lambda W)$.
 Then  $\Phi (\theta) $ is   the unique map in $\Hom (\B^\sharp
, \Hom (W ,\Lambda W ))$ making the following diagram commutative:
\begin{equation}\label{diag:iso}
\xymatrix{\B^\sharp \otimes W \ar[d]_{\id  \otimes \theta }\ar[rr]^(.4){\Phi (\theta) \otimes \id }&&\Hom(W,\Lambda W)\otimes W \ar[d]^{\ev }\\
\B^\sharp \otimes \B\otimes \Lambda W \ar[rr]_(.55){\ev\otimes \id }&&\Lambda W. }
\end{equation}

We obtain a related commutative diagram  involving  the map   $$\tilde{\varphi} \colon  \colon \B^\sharp \to \dder(\Lambda W)$$   induced by the classifying map $h.$  To do so, we observe that $\tilde\varphi$  gives rise to a   particular choice of a relative minimal model $I \colon \B, d_B \to \B \otimes \Lambda W, D$ for $p \colon E \to B$. 

Note that $\tilde{\varphi}$   restricts to give a degree $-1$ element in $\Hom (\B_+
^\sharp , \Hom(W, \Lambda W))$ where $\B_+$ means elements of positive degree.    The differential $D$ in   a  relative minimal model  for $p$ decomposes as $$D=d_W+d_T,$$ where $d_W(W)\subseteq \Lambda^{\geq 2} W$ is the differential from  the minimal model of the fibre, and   $d_T(W)\subseteq \B_+\otimes \Lambda W$ is the  ``twisted'' part  of the differential.
Then $d_T$ represents a degree $-1$ element  in  
in $\Hom (W, \B_+\otimes \Lambda W ).$

\begin{lemma}\label{iso} The restriction
$$\Phi \colon \Hom (W
,\B_+\otimes \Lambda W ) \longrightarrow  \Hom
(\B_+^\sharp , \Hom (W ,\Lambda W ))$$ is an isomorphism. Given a classifying map $h \colon B \to \Baut_1(F)$,  we may choose the twisted part of the differential $d_T$ in a relative minimal model $I \colon \B, d_B \to \B \otimes \Lambda W, D$for $p \colon E \to B$  satisfying
$\Phi (d_T)= \tilde{\varphi} $. Conversely, given a relative model for $p$  we may choose a    model  $\varphi$ for $h$ with  $\Phi (d_T)= \tilde{\varphi}.$
\end{lemma}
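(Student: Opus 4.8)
The plan is to dispatch the isomorphism assertion by a standard adjunction, and then to establish the matching $\Phi(d_T)=\tilde{\varphi}$ by identifying, on each side, the relevant integrability condition and showing that $\Phi$ transports one onto the other. For the first assertion, I would note that since $B$ is finite, $\B$ is finite-dimensional, so $\B_+\otimes\Lambda W\cong\Hom(\B_+^\sharp,\Lambda W)$ naturally. Composing this identification with the hom–tensor interchange $\Hom(W,\Hom(\B_+^\sharp,\Lambda W))\cong\Hom(\B_+^\sharp,\Hom(W,\Lambda W))$ and unwinding the definitions recovers exactly the formula (\ref{eq:Phi}); thus the restricted $\Phi$ is a composite of isomorphisms. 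The degrees match because evaluating an element of $\B_+^\sharp$ against the $\B_+$-component of $\theta(w)$ is precisely the content of the defining diagram (\ref{diag:iso}), while the counit component (the degree-zero part $\B^0=\Q$) is exactly what is discarded in passing from $\B^\sharp$ to $\B_+^\sharp$.

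For the second assertion I would invoke two standard facts. First, a Koszul–Sullivan extension $D=d_W+d_T$ squares to zero if and only if the twisting term $d_T\in\Hom(W,\B_+\otimes\Lambda W)$ satisfies an integrability equation obtained by expanding $D^2$ on $W$, which splits into a part linear in $d_B$, the cross terms with $d_W$, and the quadratic self-composition of the derivation extension of $d_T$. Second, by Proposition \ref{pro:Sull} together with the correspondence between DG algebra maps out of a commutative cochain algebra and Maurer–Cartan elements of the associated convolution Lie algebra, a model $\varphi\colon\CC^*(\dder(\Lambda W),\D)\to\B,d_B$ for the classifying map $h$ is exactly the data of a Maurer–Cartan element $\tilde{\varphi}$ of $\lin(\B_+^\sharp,\dder(\Lambda W))$, equipped with the bracket $\{\,,\,\}$ and the differential $\theta\mapsto\D\theta-(-1)^{|\theta|}\theta d_B^\sharp$; concretely $\tilde{\varphi}$ is read off from $\varphi$ via (\ref{eq:phi}).

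The heart of the argument is then to check that $\Phi$ carries the integrability equation for $d_T$ termwise onto the Maurer–Cartan equation for $\tilde{\varphi}=\Phi(d_T)$. Using the diagram (\ref{diag:iso}) I would match the three groups of terms as follows: the cross terms $d_W\circ d_T\pm d_T\circ d_W$ go to the $\D$-part $[d_W,\Phi(d_T)(\beta)]$ of the convolution differential; the $d_B$-linear term $(d_B\otimes\id)\circ d_T$ goes to $-(-1)^{|\theta|}\theta\circ d_B^\sharp$, since the restriction of the multiplication $\B\otimes\B\to\B$ dualizes to $d_B^\sharp$; and the quadratic term, arising from extending $d_T$ as a derivation and re-applying it, produces a factor in $\B_+\otimes\B_+\otimes\Lambda W$ whose $\B$-multiplication dualizes to the diagonal $\Delta$ of $\B^\sharp$, yielding precisely $\tfrac12\{\Phi(d_T),\Phi(d_T)\}$. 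Granting this, the two Maurer–Cartan conditions correspond under $\Phi$. For the forward direction I would start from a model $\varphi$ of $h$, form $\tilde{\varphi}$, set $d_T:=\Phi^{-1}(\tilde{\varphi})$, and observe that $D=d_W+d_T$ is then a genuine relative minimal model; that it models $p$ itself follows from the classical bijection between homotopy classes of classifying maps $B\to\Baut_1(F)$ and relative minimal models over $\B,d_B$, which $\Phi$ realizes explicitly. The converse is the same statement read backwards.

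The main obstacle is the sign-careful verification of this last correspondence, and in particular the quadratic/bracket term: one must track the signs introduced by the desuspension and by the derivation extension of $d_T$, and confirm that dualizing the product on $\B$ produces exactly the diagonal appearing in $\{\,,\,\}$. A secondary point requiring care is identifying the fibration modeled by $D=d_W+d_T$ with $p$ rather than with some other fibration of fibre $F$; this is where one appeals to Sullivan's classification of fibrations instead of a direct computation.
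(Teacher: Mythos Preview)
Your approach is correct in spirit but takes a different route from the paper. The paper's proof is entirely model-theoretic: it passes through the Quillen construction $\mathcal{L}^*$, realizes $\tilde\varphi$ as the restriction of a DG Lie map $\mathcal{L}(h)\colon\mathcal{L}^*(\B^\sharp)\to\dder(\Lambda W)$, composes with Gatsinzi's quasi-isomorphism into Tanr\'e's model $\mathrm{cl}(\mathcal{L}^*((\Lambda W)^\sharp))$, and then invokes Tanr\'e's classification theorem \cite[Pro.7.2(11)]{Ta} to produce a split short exact sequence of DG Lie algebras whose image under $\CC^*$ is the relative model $\B\otimes\Lambda W,D$. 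The identity $\Phi(d_T)=\tilde\varphi$ then falls out of the construction. Your proposal instead establishes the algebraic correspondence directly, by matching the integrability equation $D^2=0$ for $d_T$ with the Maurer--Cartan equation for $\tilde\varphi$ in the convolution Lie algebra, and only afterwards appeals to classification theory to identify the resulting fibration with $p$.

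Your approach is more transparent on the algebraic side and closer to the modern twisting-cochain formalism; it also makes clear \emph{why} the perturbed differential $D_\varphi$ in Proposition~\ref{pro:BFM} should match $\D$. The paper's approach, by contrast, bundles the topological identification into the construction itself, so no separate appeal is needed at the end. One point to tighten in your write-up: your final sentence asserts that ``$\Phi$ realizes explicitly'' the bijection between classifying maps and relative models, but that is precisely the content of the lemma, so you must actually extract this compatibility from whatever source you cite for the classification (Sullivan or Tanr\'e) rather than simply invoke it. Also, a small slip: the term $(d_B\otimes\id)\circ d_T$ corresponds to $\theta\circ d_B^\sharp$ because $d_B$ dualizes to $d_B^\sharp$, not because the multiplication on $\B$ does---the multiplication dualizes to the diagonal $\Delta$, which is what produces the bracket term.
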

\begin{proof}
That $\Phi$ restricts to an isomorphism is direct. Given a simply connected DG coalgebra $A, \delta$  let $\mathcal{L}^*(A, \delta)$   denote the  ``Quillen construction'',  a connected DG Lie algebra
(see \cite[Sec.22(e)]{FHT} for details).  A  DG Lie model for $h$ is then a map $ \mathcal{L}(h) \colon \mathcal{L}^*(\B^\sharp, d_B^\sharp) \to \dder(\Lambda W), \D.$  
The map $\tilde\varphi$ corresponds here to   $\mathcal{L}(h)$ restricted to the basis $\B^\sharp.$ Next we have a  quasi-isomorphism $\psi \colon \dder(\Lambda W) \to \mathrm{cl}(\mathcal{L}^*((\Lambda W)^\sharp))$
where $\mathrm{cl}(\mathcal{L}^*((\Lambda W)^\sharp)$ is the DG Lie model for  $\Baut_1(F)$ given in \cite[Ch.7]{Ta} and $\psi$ is the DG Lie algebra map constructed in \cite[Th.1]{Gat}.   The composition $\psi \circ  \mathcal{L}(h) \colon \mathcal{L}^*(\B^\sharp, d_B^\sharp) \to \mathrm{cl}(\mathcal{L}^*((\Lambda W)^\sharp), d_W^\sharp)$ gives rise, via the classification theorem \cite[Pro.7.2(11)]{Ta}, to a split exact sequence of DG Lie algebras   of the form 
$$0 \to \mathcal{L}^*((\Lambda W)^\sharp), d_W^\sharp) \to L, \delta  \to \mathcal{L}^*(\B^\sharp, d_B^\sharp) \to 0.$$ 
Applying $\CC^*$ we see $\CC^*(L, \delta) \cong \B \otimes \Lambda W, D$ for some differential $D$.  We take this as the relative model $ I \colon \B, d_B \to \B \otimes \Lambda W, D$ for $p$.  Tracing through the construction, we see
$\Phi(d_T) = \tilde\varphi.$  The converse result is proved similarly. 
\end{proof}

 Lemma \ref{iso} gives the commutativity of the following diagram:
\begin{equation}\label{diag:h}
\xymatrix{\B_+^\sharp \otimes W \ar[d]_{\id \otimes d_T }\ar[rr]^(.42){\tilde\varphi \otimes \id }&&\Hom(W,\Lambda W)\otimes W \ar[d]^{\ev }\\
\B_+^\sharp \otimes \B_+\otimes \Lambda W \ar[rr]_(.6){\ev\otimes \id }&&\Lambda W. }
\end{equation}

Our main technical result in this section is the following. 

\begin{theorem} \label{thm:quasi}  Let $\B, d_B$ be finite-dimensional DG algebra and $W$ a graded vector space of finite type.  Let $I \colon \B, d_B \to \B \otimes \Lambda W,D$ be a relative minimal model with twisted differential $d_T$.  Suppose $\tilde\varphi \colon
\B^\sharp \to \Hom(W, \Lambda W)$ gives the commutativity of diagram \emph{(\ref{diag:h})}.  The restriction 
$$\Psi \colon \dder_\B(\B\otimes \Lambda W ), \D \to \lin(\B ^\sharp ,
\dder (\Lambda W )), D_\varphi$$ of the map $\Phi$   defined by \emph{(\ref{eq:Phi})}  is an isomorphism of  graded DG Lie algebras.  
\end{theorem}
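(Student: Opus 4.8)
\emph{Overview of the plan.} Since $\Phi$ is already an isomorphism of the underlying graded vector spaces (immediate from its definition (\ref{eq:Phi}); compare Lemma~\ref{iso} and the identifications recorded after (\ref{eq:Phi})), and since the two connected truncations defining $\dder_\B(\B\otimes\Lambda W)$ and $\lin(\B^\sharp,\der(\Lambda W))$ agree degreewise once $\Phi$ is known to be a chain map, it suffices to prove that $\Phi$ is a morphism of DG Lie algebras: that it preserves brackets and intertwines $\D$ with $D_\varphi$. I would organise the differential computation around the decomposition $D = d_B + d_W + d_T$, where $d_B = d_B\otimes\id$ acts on the base factor, $d_W$ is the $\B$-linear extension of the fibre differential, and $d_T$ is the twisting term with $\Phi(d_T)=\tilde\varphi$ supplied by Lemma~\ref{iso}. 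For $\theta\in\der_\B(\B\otimes\Lambda W)$ this gives $\D\theta = [D,\theta] = [d_B,\theta] + [d_W,\theta] + [d_T,\theta]$, and the goal is to match these three summands with the three terms of $D_\varphi$.

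\emph{Bracket preservation.} First I would verify $\Phi([\theta_1,\theta_2]) = \{\Phi\theta_1,\Phi\theta_2\}$. The conceptual point is that composing two $\B$-derivations vanishing on $\B$ multiplies their base factors while composing their fibre factors; under the duality $\B^\sharp$ this base multiplication is precisely the coproduct $\Delta$ appearing in the convolution bracket $\{\,,\,\}$, and the fibre composition is the commutator bracket of $\der(\Lambda W)$. Unwinding the defining formula (\ref{eq:Phi}) and the characterisation (\ref{diag:iso}), together with the Koszul sign $(-1)^{|\theta||\beta|}$, converts the commutator on the left into the convolution bracket on the right. This is a direct, if sign-sensitive, calculation.

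\emph{Differential preservation.} The term $[d_W,\theta]$ is the fibre commutator taken pointwise in $\B^\sharp$, and since $\delta$ is exactly this fibre differential on $\der(\Lambda W)$ one gets $\Phi([d_W,\theta]) = \delta\,\Phi(\theta)$. The term $[d_B,\theta]$ involves only the base differential; dualising $d_B$ to $d_B^\sharp$ and tracking the sign from moving $d_B$ past $\theta$ yields $\Phi([d_B,\theta]) = -(-1)^{|\theta|}\Phi(\theta)\,d_B^\sharp$. Finally, the twisted term follows from the bracket preservation just established: as $\Phi(d_T)=\tilde\varphi$, one obtains $\Phi([d_T,\theta]) = \{\Phi(d_T),\Phi(\theta)\} = \{\tilde\varphi,\Phi(\theta)\}$. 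Summing the three identities gives $\Phi\circ\D = D_\varphi\circ\Phi$.

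\emph{Conclusion and main obstacle.} Being a chain isomorphism, $\Phi$ carries cycles to cycles, so it identifies $Z\der^1_\B(\B\otimes\Lambda W)$ with $Z\Hom^1(\B^\sharp,\der(\Lambda W))$ and is the identification already noted in degrees $\geq 2$; hence its restriction $\Psi$ is a well-defined isomorphism of the connected DG Lie algebras. I expect the twisted term to be the crux: identifying $[d_T,\theta]$ with $\{\tilde\varphi,\theta\}$ is where the entire classifying-map datum enters, and it rests on the commutativity of diagram (\ref{diag:h}) together with a careful reconciliation of the suspension signs in $\der(\Lambda W)$ with those in the evaluation maps of (\ref{diag:iso}). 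The bracket computation is likewise sign-heavy, but the essential content lies in this $d_T\leftrightarrow\tilde\varphi$ matching.
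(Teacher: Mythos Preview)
Your proposal is correct and follows the same strategy as the paper: verify that the underlying graded isomorphism $\Phi$ respects brackets and differentials, then restrict to the connected truncations. The only organizational difference is that you prove bracket preservation first and then obtain the twisted part of the differential as the special case $\theta_1=d_T$ via $\Phi(d_T)=\tilde\varphi$, whereas the paper establishes the differential identity first---handling the $d_T$-piece by an explicit diagram chase (the large diagram around (\ref{diag:h}) and (\ref{diag:iso}))---and checks brackets afterward. Your ordering is a genuine economy: the paper's verification of $(b)=(B)$ is precisely the bracket computation specialised to $d_T$ and $\theta$, so deferring it avoids repeating that argument. Note, however, that your use of bracket preservation for $[d_T,\theta]$ requires that the bracket identity hold on all of $\Der_\B(\B\otimes\Lambda W)$, including the degree~$-1$ element $d_T$, not just on the connected truncation $\dder_\B(\B\otimes\Lambda W)$; this is harmless since the calculation is insensitive to degree, but it is worth making explicit.
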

\begin{proof} 
Since  $\Psi$ is evidently an  isomorphism of graded spaces, it suffices to show $\Psi$   commutes with differentials and brackets. 

We first show that $$D_\varphi\circ \Psi
=\Psi \circ \D.$$ Let $\theta \in \der_\B(\B\otimes \Lambda W
)$, $\beta \in \B^\sharp $, $w\in W $. Then we have
\begin{eqnarray*}
\Psi (\D \theta )(\beta )(w)&=&(-1)^{(|\theta |+1)|\beta |}(\beta \otimes \id_{\Lambda W }
)(D\circ \theta -(-1)^{|\theta |} \theta \circ D)(w)\\
&=&(-1)^{(|\theta |+1)|\beta |}(\beta \otimes id_{\Lambda W })[(d_{B }\otimes id_{\Lambda W })\theta (w)+ (id_\B \otimes d_T )\theta (w)\\
&+ &(id_\B \otimes d_W )\theta (w)+(-1)^{|\theta |+1} \theta d_T w +(-1)^{|\theta |+1}  \theta d_W w ]\\
&=&(a)+(b')+(c')+(b'')+(c'').
\end{eqnarray*}
We see $(b)=(b')+(b'')$ and $(c)=(c')+(c'')$.

On the other hand,
\begin{eqnarray*}
D_\varphi \Psi(\theta) (\beta )(w) &=&(\D\circ \Psi(\theta) +(-1)^{|\theta |+1}
\Psi (\theta)
\circ d_B^\sharp + \ad_\varphi\Psi (\theta) )(\beta )(w)\\
&=&(C)+(A)+(B).
\end{eqnarray*}
A straightforward computation using the commutativity of the following diagrams
\begin{equation}\label{diag:m and d}
\xymatrix{\B^\sharp \otimes \B \otimes \B \ar[d]_{\Delta \otimes
\id } \ar[rr]^(.55){\id \otimes m }&&\B
^\sharp \otimes
\B \ar[d]^{\ev }&&\B\otimes \B^\sharp \ar[d]_{\id \otimes d_B^\sharp }\ar[r]^{d_B\otimes \id }&\B\otimes \B^\sharp \ar[d]^{\ev} \\
\B ^\sharp \otimes \B ^\sharp \otimes \B \otimes \B
\ar[rr]_(.6){\ev }&&\mathbb{Q}&&\B\otimes \B^\sharp \ar[r]_(.55){\ev }&\mathbb{Q}, }
\end{equation}
and the diagrams (\ref{diag:iso}), (\ref{diag:h}),
gives $(a)=(A), (b)=(B), (c)=(C)$.

For instance, to see explicitly the equality $(b)=(B)$, it is enough to show that the following compositions agree
$$(\ev \otimes \id )\circ (\id_{\B^\sharp }\otimes (m\otimes \id_{\Lambda W})\circ ((\id_{\B^\sharp} \otimes d_T)\circ \theta +(-1)^{|\theta |}\theta \circ d_T))\colon \B^\sharp \otimes W\to \Lambda W,$$
$$\ev \circ (\ad_\varphi \Psi( \theta) \otimes \id_{\Lambda W})\colon \B^\sharp \otimes W\to \Lambda W.$$

Note that the first composition splits into two summands. Recall that $\ad_\varphi (f)=[ \, , \,]\circ (\varphi\otimes f )\circ \Delta $, where the bracket $[ \, , \, ]$ in $ \der (\Lambda W)$  is  the commutator.    Hence the second composition also splits into two summands. Then, the first summand of both compositions appears as the two possible exterior compositions from the top left corner to the bottom right corner of the following commutative diagram, so they agree.

{\scriptsize
$$ 
\xymatrix@C=0pt @W=0pt @H=0pt{ \B^\sharp \otimes W\ar[d]_{\Delta \otimes \id}\ar[r]^(.45){\id \otimes d_T}&\B^\sharp \otimes \B\otimes \Lambda W\ar[d]^{\Delta \otimes \id }\ar[rr]^{\id \otimes \theta }&&\B^\sharp \otimes \B\otimes \B\otimes \Lambda W \ar[d]_{\Delta \otimes \id }\ar[r]^{\id \otimes m\otimes \id}&\B^\sharp \otimes \B \otimes \Lambda W \ar[d]^{\ev \otimes \id }
\\
\B^\sharp \otimes \B^\sharp \otimes W \ar@{}[rd]|-{\hbox{(ii)}}\ar[d]_{\id \otimes \tilde\varphi\otimes \id } \ar[r]^(.45){\id \otimes d_T}&\B^\sharp \otimes \B^\sharp \otimes \B \otimes \Lambda W\ar[d]^{\id \otimes \ev \otimes \id } \ar[rr]^{\id \otimes \theta }&&\B^\sharp \otimes \B^\sharp \otimes \B \otimes \B \otimes \Lambda W\ar[d]_{\ev \otimes \id } \ar[r]^(.75){\ev \otimes \id }& \Lambda W \ar@{=}[d]\\
\B^\sharp \otimes \Hom (W,\Lambda W)\otimes W\ar[d]_{\Psi (\theta) \otimes \id } \ar[r]^(.65){\id \otimes \ev }& \B^\sharp \otimes \Lambda W \ar@{}[rd]|-{\hbox{(i)}}\ar[d]_{\Psi(\theta) \otimes \id }\ar[r]^(.45){\id \otimes \theta }&\B^\sharp \otimes \B \otimes \Lambda W \ar[d]^{\ev \otimes \id }\ar[r]^(.6){\ev \otimes \id }&\Lambda W \ar@{=}[d]\ar@{=}[r]&\Lambda W \ar@{=}[d]\\
\Hom (W,\Lambda W)^{\otimes 2}\otimes W\ar[r]^{\id \otimes \ev }&\Hom (W,\Lambda W)\otimes \Lambda W\ar[r]^(.7){\ev}&\Lambda W\ar@{=}[r]&\Lambda W\ar@{=}[r]&\Lambda W. }
$$
}

Interchanging $d_T$ with $\theta $ and $\tilde\varphi $ with $\Psi( \theta) $ and  
interchanging (i) with (ii) in the above diagram, we obtain the equality of the second summands of both compositions. 

To see that $\Psi$ preserves brackets,  let $\theta_1, \theta_2 \in \Der_\B(  \B \otimes \Lambda W)$ be derivations.  Recall that $[\ ,\ ]$ is the bracket in $\Der^*_{\B}(\B \otimes
\Lambda W )$ and $\{\ ,\ \}$ is the bracket in
$\mathcal{H}om(\B^\sharp , \Der (\Lambda W))$. Then, given $\beta
\in \B^\sharp $ and $w\in W$, both $\{ \Psi (\theta_1 ),\Psi
(\theta_2 )\}(\beta )(w)$ and $[\Psi (\theta _1), \Psi (\theta_2
)](\beta )(w)$ split into two summands. We write the first
summands arising from $\{ \Psi (\theta_1 ),\Psi (\theta_2 )\}$ and
$[\Psi (\theta _1), \Psi (\theta_2 )]$ as the  upper and lower  
compositions, respectively, in the following diagram:

{\scriptsize
$$  \xymatrix{&&&\B^\sharp \otimes \B^\sharp \otimes \B\otimes \B \otimes \Lambda W\ar[rd]^{\ev \otimes \id}&\\
\B^\sharp \otimes W\ar[r]^(.4){\id \otimes \theta_2}& \B^\sharp
\otimes \B \otimes \Lambda W\ar[r]^(.45){\id \otimes
\theta_1}&\B^\sharp \otimes \B\otimes \B \otimes \Lambda
W\ar[ru]^{\Delta \otimes \id }\ar[rd]_{\id \otimes m}&&\Lambda W.\\
&&&\B^\sharp \otimes \B\otimes \Lambda W\ar[ru]_{\ev \otimes \id}&
}$$} 
The compositions agree by the commutativity of the first diagram in (\ref{diag:m and d}). 
Similarly,  the second summands in these brackets agree as may be seen  by interchanging $\theta_1$ and $\theta_2$. 

A  proof,  given in terms of   elements,  goes as follows. Here we work modulo signs and summations for convenience. 
Suppose $\theta_1(w)=b_1w_1$, $\theta_2(w)=b_2w_2$,
$\theta_1(w_2)=b_1'w_1'$ and $\theta_2(w_1)=b_2'w_2'$. Then, on
the one hand
\begin{eqnarray*}
\{ \Psi (\theta_1),\Psi (\theta_2)\}(\beta )(w)&=&\sum_j[ \Psi
(\theta_1)(\beta_j'),\Psi (\theta_2)(\beta_j'')](w)\\
&=&\sum_j \Bigr( \Psi (\theta_1)(\beta_j')\circ \Psi
(\theta_2)(\beta_j'')(w)\\
&\pm &\Psi (\theta_2)(\beta_j'')\circ \Psi
(\theta_1)(\beta_j')(w)\Bigl)\\
&=&\sum_j \beta_j''(b_2)\beta_j'(b_1')w_1'\pm
\beta_j'(b_1)\beta_j''(b_2')w_2'.
\end{eqnarray*}

On the other hand
\begin{eqnarray*}
\Psi ([\theta_1, \theta_2])(\beta )(w)&=&\pm (\beta \otimes
\id_{\Lambda W})[\theta_1,\theta_2](w)\\
&=&\pm (\beta \otimes \id_{\Lambda W})(\theta_1\circ \theta_2 \pm
\theta_2\circ \theta_1 )(w)\\
&=&\pm (\beta \otimes \id_{\Lambda W})(b_2b_1'w_1'\pm b_1b_2'w_2')\\
&=&\pm \beta (b_2b_1')w_1'\pm \beta (b_1b_2')w_2',
\end{eqnarray*}
and both expressions agree, again by the commutativity of the first diagram in (\ref{diag:m and d}).
\end{proof}

We can now deduce our main result:

\begin{proof}[Proof of Theorem \ref{thm:main}]
Let $p \colon E \to B$ be a fibration of simply connected CW complexes with $B$ and the fibre $F$ finite. Let $h \colon B \to \Baut_1(F)$ be the classifying map. We choose a relative minimal model $I \colon \B, d_B \to  \B \otimes \Lambda W, D$ with $\B$ finite-dimensional and the twisted part of $D$ satisfying Lemma \ref{iso}. 
By Theorem \ref{thm:quasi}, the map $\Psi$ is a DG Lie algebra isomorphism.  The result now follows from Propositions \ref{pro:BHMP}, \ref{pro:Sull} and \ref{pro:BFM}.
\end{proof} 
\begin{remark}      Given a fibration $p \colon E \to B$ as hypothesized in Theorem \ref{thm:main},  choose a base-point for $E$ and let $\aut_1^*(p)$ denote the space of base-point preserving fibre homotopy equivalences of $p.$  The above arguments apply to give  a  Quillen model of the form $\overline{\dder}_\B(\B \otimes \Lambda W), \D$ for the space $\Baut^*_1(p).$  Here $$ \overline{\dder}_\B(\B \otimes \Lambda W) = \{ \theta \in \dder_\B(\B \otimes \Lambda W) \mid \theta(w) \in (\B \otimes \Lambda W)_+ \}.$$
The differential $\D$ and bracket $[ \, , \, ]$ are those induced from $\dder_\B(\B \otimes \Lambda W), \D$.  We omit the details. 
\end{remark}

\section{Two problems  on  fibrewise self-equivalences} \label{problems}
    Let $p \colon E \to B$ be a fibration of simply connected CW complexes with $B$ and the fibre $F$ finite.
The assignment $p \mapsto \Baut_1(p)_\Q $ induces a function
\begin{equation}  \label{eq:function} \frac{\{ \hbox{fibrations $p \colon E \to B$ with fibre $F$} \}}{\simeq_{\mathrm{fhe}}}  \,  \longrightarrow  \, \frac{\{ \hbox{classifying spaces $\Baut_1(p)$} \} }{\simeq_\Q} \end{equation}
where $\simeq_{\mathrm{fhe}}$ denotes fibre-homotopy equivalence and $\simeq_\Q$ rational equivalence.
The first general  problem we propose is the following analogue of the gauge group classification problem mentioned in the introduction.
\begin{problem} \label{prob1} Classify the distinct rational homotopy types in  the image of the function \emph{(\ref{eq:function})} for fixed fibre $F$ and base $B$.
\end{problem}

When   the rationalization $Y_\Q$ of a nilpotent space $Y$ is an H-space we say $Y$ is a {\em rational H-space}.   When  $\Baut_1(p)$ is a rational H-space,      the monoid $\aut_1(p)$ is then a double loop-space after rationalization   and, in particular, rationally H-homotopy commutative (cf. \cite{Sch}).  It is natural to consider:

\begin{problem} \label{prob2}  Determine  conditions on $p \colon E \to B$  that ensure $\Baut_1(p)$ is a rational H-space.
\end{problem}

The function space identity for $\Baut_1(p)$  (Proposition \ref{pro:BHMP})  leads to a  class of  familiar examples   for which both problems admit the simplest possible solution. The   basic idea has appeared in many places (see \cite{BHMP, FO, KSS}).    We have:
\begin{theorem} \label{thm:H} Let   $F$ be a CW complex of finite type   and  $B$ a simply connected   finite CW complex. Suppose $\Baut_1(F)$ is a rational H-space.  Then the the classifying spaces $\Baut_1(p)$ for fibrations $p$ with fibre $F$ and base $B$ are all rational H-spaces and pairwise  rationally homotopy equivalent.
\end{theorem}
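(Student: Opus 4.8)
\emph{The plan} is to reduce everything to the function space identity of Proposition \ref{pro:BHMP} and then exploit the fact that a simply connected rational H-space is a product of rational Eilenberg--Mac Lane spaces. Write $Y = \Baut_1(F)$, so that $Y$ is a simply connected CW complex and, by Proposition \ref{pro:BHMP}, $\Baut_1(p) \simeq \widetilde{\map}(B, Y; h)$ as $h$ ranges over the classifying maps of the various fibrations $p$. By hypothesis $Y_\Q$ is an H-space; being simply connected it is group-like, and a classical result of rational homotopy theory identifies any simply connected rational H-space with a generalized Eilenberg--Mac Lane space, i.e. $Y_\Q \simeq \prod_n K(\Q, n)^{k_n}$ for suitable integers $k_n$.

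First I would analyze the mapping space into $Y_\Q$. Since the target is a product of rational Eilenberg--Mac Lane spaces, $\map(B, Y_\Q) \simeq \prod_n \map(B, K(\Q, n)^{k_n})$, and each factor $\map(B, K(\Q, n))$ is itself a product of rational Eilenberg--Mac Lane spaces because $\map(B, K(\Q,n)) \simeq \prod_j K(H^{n-j}(B; \Q), j)$. Hence $\map(B, Y_\Q)$ is again a generalized Eilenberg--Mac Lane space. Two consequences follow at once: every path component of $\map(B, Y_\Q)$ is a rational H-space, and, because a generalized Eilenberg--Mac Lane space is a topological abelian group up to homotopy, translation carries any component homotopy equivalently onto the identity component. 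Thus all the components $\map(B, Y_\Q; h_\Q)$ are rational H-spaces and are pairwise homotopy equivalent, independently of $h$.

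The remaining step is to transport this conclusion from $Y_\Q$ back to $Y$ and then to the universal cover. Here I would invoke the function space localization theorem (Hilton--Mislin--Roitberg, M\o{}ller): since $B$ is a finite complex and $Y$ is simply connected, each component of $\map(B, Y)$ is nilpotent and the natural map realizes $\map(B, Y_\Q; h_\Q)$ as the rationalization of $\map(B, Y; h)$. Combined with the previous paragraph, this shows $\map(B, Y; h)$ is a rational H-space whose rational homotopy type is independent of $h$. Finally, passing to universal covers, $\Baut_1(p) \simeq \widetilde{\map}(B, Y; h)$ is simply connected and its rationalization is the universal cover of the rational H-space $\map(B, Y_\Q; h_\Q)$; since the universal cover of a rational H-space is again a rational H-space, and homotopy equivalent spaces have homotopy equivalent universal covers, I conclude that every $\Baut_1(p)$ is a rational H-space and that they are pairwise rationally homotopy equivalent.

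The step I expect to be the main obstacle is the application of the function space localization theorem: one must confirm its finiteness and nilpotence hypotheses in the present generality, where $F$ --- and hence $Y = \Baut_1(F)$ --- is assumed only to be of finite type rather than finite. The simple connectivity of $Y$ secures the nilpotence of the mapping space components, so the essential point to check is that the localization statement remains valid for the (possibly not finite type) target $Y$ over the finite source $B$; once this is in hand, the rest of the argument is formal.
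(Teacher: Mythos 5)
Your argument is correct and follows essentially the same route as the paper: Proposition \ref{pro:BHMP}, the Hilton--Mislin--Roitberg identification $\map(B,Y;h)_\Q \simeq \map(B,Y_\Q;h_\Q)$ for finite $B$, and then the observation that all components of $\map(B,Y_\Q)$ are pairwise equivalent H-spaces, passing to universal covers at the end. The only difference is the final ingredient --- where you decompose $Y_\Q$ as a product of rational Eilenberg--Mac\ Lane spaces and apply the splitting of $\map(B,K(\Q,n))$, the paper gets by with less, quoting Scheerer's lemma that a rational H-space is a loop space (hence admits a homotopy inverse, so translation identifies all components with the null component) and using pointwise multiplication there; also, the finiteness worry you raise at the end is moot, since the cited localization theorem requires only that $B$ be finite and the target nilpotent.
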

\begin{proof}
Given an $F$-fibration $p \colon E \to B$, as hypothesized, we have
$$ \Baut_1(p)_\Q \simeq \widetilde{\map}(B, \Baut_1(F); h)_\Q \simeq \widetilde{\map}(B, \Baut_1(F)_\Q; h_\Q)$$
where the first equivalence is (\ref{eq:Baut}) and the second  is \cite[Th.3.11]{HMR}. Here $h_\Q$ denotes $h$ composed with a rationalization $r \colon \Baut_1(F) \to \Baut_1(F)_\Q.$  Now  a rationalized H-space is homotopy equivalent  a loop-space (\cite[Lem.0.1]{Sch}) and so, in particular, admits a homotopy inverse. It follows that  the path-components of $\widetilde{\map}(B, \Baut_1(F)_\Q)$ are each homotopy equivalent to   $\widetilde{\map}(B, \Baut_1(F)_\Q; 0 ),$ the component of the null map.  Since $\map(B, \Baut_1(F)_\Q; 0)$ is   an H-space with pointwise multiplication, so is its universal cover.   \end{proof}
\begin{remarks} \label{rems}
(1)  The famous Halperin conjecture \cite[39.1]{FHT}, affirmed in many cases, implies $\Baut_1(F)$ is a rational H-space whenever  $F$ is an  elliptic CW complex  with $H^*(F; \Q)$ evenly graded. See \cite[Ex.2.6]{FLS} for a discussion in this context.   Theorem \ref{thm:H} applies, for example,  when $F$ has truncated polynomial rational cohomology  or $F = G/H$ for $H \subseteq G$ equal rank, compact, connected Lie pairs \cite{ST}.  

(2)  On the other hand,  if $G$ is a topological group then $\Baut_1(G)$ is rarely a rational H-space: in fact, only when $\pi_*(G) \otimes \Q$  is concentrated in a single degree. Problems  \ref{prob1} and \ref{prob2} are thus already  interesting  when restricted to     principal $G$-bundles over a fixed base $B$.

(3)  A similar argument shows the classifying spaces $B\G(p)$ for gauge groups of $G$-principal bundles $p \colon E \to B$ are pairwise rationally homotopy equivalent and each a rational H-space.  The assumption $B$ simply connected is no longer required since the  identity  $B\G(p) \simeq \map(B, BG; h)$ holds without this restriction.  It is  classical that $BG$ is a rational H-space for $G$ connected.
This result for $B\G(p)$ holds, in fact,  for $B$ any compact metric space \cite[Th.D]{KSS}. Theorem \ref{thm:H}  and (1) above  also extend  to spaces $B$ without CW structure by \cite[Th.2]{SS}.  \end{remarks}

We give a sample of  results on Problems \ref{prob1} and \ref{prob2}   in the some  cases not covered by Theorem \ref{thm:H}. Fix $F$ and $B$ both finite and simply connected.  Let $\B, d_B$ be a finite model for $B$ and $\Lambda W, d_W$ the Sullivan minimal model for $F$. The relative 
minimal models for $F$-fibrations over $B$ are of the form    $$ I \colon\B, d_B \to \B \otimes \Lambda W, D.$$ Recall, the differential $D$ extends $d_B$  and  satisfies the  nilpotence condition whereby $W  = \bigoplus_{i \geq 0} W_{i}$ and $D(W_{i}) \subseteq \B \otimes \Lambda W_{ < i}$ (see \cite[Sec.14]{FHT}).   We are led to consider the possible differentials $D$ on $\B \otimes \Lambda W$.  These differentials  give rise, in turn, to differentials $\D$ for the   graded Lie algebra $\dder_\B(\B \otimes \Lambda W).$
  We will make use of the following consequence  of Theorem \ref{thm:main}.
\begin{corollary} \label{cor}  Let $B$ and $F$ be  finite, simply connected CW complexes. 
\begin{itemize}
\item[(A)] Given $F$-fibrations $p$ and $p'$     over $B$ then $ \Baut_1(p) \simeq_\Q \Baut_1(p')$    if and only if  there is a quasi-isomorphism $$\psi \colon  \dder_\B(\B \otimes \Lambda W), \D \to \dder_\B(\B \otimes \Lambda W), \D'.$$
\item[(B)]  The following are equivalent.
\begin{itemize}
\item[(i)]  $\Baut_1(p)$ is a rational H-space
\item[(ii)] $\dder_\B(\B \otimes \Lambda W), \D$ is quasi-isomorphic to an abelian DG Lie algebra
\item[(iii)]  $\CC^*(\dder_\B(\B \otimes \Lambda W), \D)$ is quasi-isomorphic to a DG algebra with trivial differential.
\end{itemize}\end{itemize}  \qed
\end{corollary}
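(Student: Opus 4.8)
The plan is to recognize \corref{cor} as a formal translation of \thmref{thm:main} through the standard dictionary of rational homotopy theory (see \cite{FHT}), applied to the common underlying graded Lie algebra $L = \dder_\B(\B \otimes \Lambda W)$, whose homology $H_*(L)$ is, by \thmref{thm:main} and \eqref{eq:Samelson}, the rational homotopy Lie algebra $\pi_*(\aut_1(p)) \otimes \Q$.

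For part (A), I would invoke Quillen's equivalence between the homotopy category of simply connected rational spaces and that of connected DG Lie algebras, realized here by $\CC^*$ followed by Sullivan realization. By \thmref{thm:main}, $L, \D$ and $L, \D'$ are Quillen models for $\Baut_1(p)$ and $\Baut_1(p')$, so $\Baut_1(p) \simeq_\Q \Baut_1(p')$ if and only if these two DG Lie algebra structures on $L$ are quasi-isomorphic. Concretely, a quasi-isomorphism $\psi$ induces a quasi-isomorphism $\CC^*(\psi)$ of commutative cochain algebras; composing with the identifications $\CC^*(L, \D) \simeq A_{\mathrm{PL}}(\Baut_1(p))$ supplied by \thmref{thm:main} yields the rational equivalence, and the converse reverses this. (Since quasi-isomorphism is the equivalence relation generated by zig-zags, realizing it by a single $\psi$ may require a cofibrant replacement; the essential content is the weak equivalence of the two models.)

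For part (B), I would read each condition off $H_*(L)$. For (i) $\Leftrightarrow$ (ii): a simply connected finite-type space is a rational H-space exactly when all rational Whitehead products vanish, i.e.\ when its homotopy Lie algebra is abelian. If $L$ is quasi-isomorphic to an abelian DG Lie algebra then $H_*(L)$ has trivial bracket, giving (ii) $\Rightarrow$ (i); conversely a rational H-space is rationally a product of \EM spaces, whose Quillen model is the abelian graded Lie algebra $(H_*(L), 0)$, and uniqueness of Quillen models gives (i) $\Rightarrow$ (ii). For (ii) $\Leftrightarrow$ (iii) I would pass through $\CC^*(L) = \Lambda(s^{-1}L^\sharp)$, with $d = d_1 + d_2$ where $d_1$ is dual to $\D$ and $d_2$ is dual to the bracket. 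If $L$ is quasi-isomorphic to an abelian model, reducing that model to its homology shows $L \simeq (H_*(L), 0)$ with vanishing bracket and differential; then $\CC^*(H_*(L), 0) = (\Lambda(s^{-1}H_*(L)^\sharp), 0)$ has $d_1 = d_2 = 0$, so (ii) $\Rightarrow$ (iii). For (iii) $\Rightarrow$ (ii), since $\CC^*(L)$ is itself a free Sullivan model for $\Baut_1(p)$, the hypothesis forces the minimal model of $\Baut_1(p)$ to have trivial differential, so $\Baut_1(p)_\Q$ is a product of \EM spaces and $L$ is quasi-isomorphic to an abelian DG Lie algebra.

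The step I expect to be the main obstacle is (iii) $\Rightarrow$ (ii). Being quasi-isomorphic to \emph{some} DG algebra with trivial differential is in general only formality, which is strictly weaker than the H-space condition: for example $S^2$ is formal but not a rational H-space. The resolution exploits that $\CC^*(L)$ is already free, so the comparison algebra must be taken free as well (equivalently, the minimal Sullivan model of $\Baut_1(p)$ carries the zero differential), whereby (iii) genuinely encodes freeness of the rational cohomology together with formality, i.e.\ the full rational H-space condition. Maintaining this reading of (iii), and tracking the zig-zag equivalences faithfully across $\CC^*$ and its adjoint in both directions, is the delicate bookkeeping the argument requires.
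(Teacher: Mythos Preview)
The paper gives no proof of \corref{cor}; it is marked with a \qed\ and treated as an immediate consequence of \thmref{thm:main} together with standard rational homotopy theory. Your arguments for part (A) and for (i)$\Leftrightarrow$(ii) and (ii)$\Rightarrow$(iii) in part (B) are the intended ones and are correct.

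You are right to isolate (iii)$\Rightarrow$(ii) as the obstacle, but your proposed resolution does not work. Read literally, condition (iii) says exactly that $\Baut_1(p)$ is a formal space, and your own example $S^2$ already shows that formality does not force the rational H-space property. Your claim that ``the comparison algebra must be taken free as well'' because $\CC^*(L)$ is free is simply false: the minimal model $\Lambda(a,b)$ with $|a|=2$, $|b|=3$, $db=a^2$ of $S^2$ is free and is quasi-isomorphic to $\Q[a]/(a^2)$, which has trivial differential and is not free; no free CDGA with trivial differential lies in this quasi-isomorphism class. Nothing in the hypothesis of (iii) constrains the target algebra to be free. The equivalence (i)$\Leftrightarrow$(iii) therefore requires either reading (iii) more restrictively than written (most plausibly as ``the minimal Sullivan model of $\Baut_1(p)$ has trivial differential'', which \emph{is} equivalent to (i)), or else an argument specific to the spaces $\Baut_1(p)$ that you have not supplied. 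Since the applications in \secref{problems} invoke only (ii)$\Rightarrow$(i), this imprecision does not affect the paper's downstream results, but your argument as written does not establish (iii)$\Rightarrow$(i).
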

 \subsection{The  Classification Problem for Principal Bundles.}
  Problem \ref{prob1} is quite complicated for general $F$ and $B$. We restrict attention to principal $G$-bundles $p \colon E \to B$ for fixed $G$ and $B.$
  This ensures that  the differential $D$ is {\em pure}  in the sense that  $D(W) \subseteq \B$   \cite[Sec.15.f]{FHT}.  When $D$ is pure,    the differential  $\D$   reduces to  $\D(\theta) = D \circ \theta$ for $\theta \in \dder_\B(\B \otimes \Lambda W)$.

  Suppose now that   $W \cong \pi_*(G) \otimes \Q$ is finite-dimensional.   Let $n$ denote the maximal nontrivial degree  i.e., $\pi_n(G) \otimes \Q  \neq 0$ and $\pi_k(G) \otimes \Q  = 0$ for $ k > n.$  Here  is one general situation when non-equivalent bundles yield the same rational homotopy type for the classifying space:
 \begin{lemma}  \label{lem:equiv} Let $p$ and $p'$ be principal  $G$-bundles over $B$ with differentials $D$ and $D'$ in their relative minimal models, respectively.
 If $D = D'$ on $W_{ < n}$ where $n$ is the maximal nontrivial degree of $W $  then $\Baut_1(p) \simeq_\Q \Baut_1(p').$
 \end{lemma}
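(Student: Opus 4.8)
The plan is to invoke \corref{cor}(A): it suffices to exhibit a quasi-isomorphism from $\dder_\B(\B \otimes \Lambda W), \D$ to $\dder_\B(\B \otimes \Lambda W), \D'$, where $\D$ and $\D'$ are the differentials determined by the pure differentials $D$ and $D'$. I expect the stronger statement that these two DG Lie algebras literally coincide, so that the identity map is the required quasi-isomorphism; the hypothesis $D = D'$ on $W_{<n}$ should render the choice of top-degree part of the differential invisible to the derivation complex.

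Since the bracket is the commutator of derivations and the truncation to $\dder$ is by cycles, the underlying graded Lie algebra is visibly independent of the differential on $\B \otimes \Lambda W$; thus everything reduces to checking $\D = \D'$. Because $D$ and $D'$ are pure, we have $\D(\theta) = D \circ \theta$ and $\D'(\theta) = D' \circ \theta$ for $\theta \in \dder_\B(\B \otimes \Lambda W)$, the $\theta \circ D$ terms dropping out since $D(W), D'(W) \subseteq \B$ and $\theta(\B) = 0$. As a $\B$-linear derivation is determined by its values on $W$, I only need to compare $D(\theta(w))$ with $D'(\theta(w))$ for $w \in W$.

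The heart of the argument, and the step I expect to be the main obstacle, is a degree count. Every $\theta \in \dder^q_\B(\B \otimes \Lambda W)$ has $q \geq 1$ and lowers degrees by $q$, so for $w \in W$ of degree $r \leq n$ the image $\theta(w)$ lies in degree $r - q \leq n-1$. Any monomial of $\B \otimes \Lambda W$ involving a top-degree generator from $W_n$ has degree at least $n$; hence no such monomial can occur, and therefore $\theta(w) \in \B \otimes \Lambda(W_{<n})$. On this subalgebra $D$ and $D'$ agree: both equal $d_B$ on $\B$ and they agree on $W_{<n}$ by hypothesis, so, being derivations agreeing on a generating set, they coincide there. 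Thus $D(\theta(w)) = D'(\theta(w))$, giving $\D = \D'$ and hence equality of the two DG Lie models. The identity then supplies the quasi-isomorphism demanded by \corref{cor}(A), and $\Baut_1(p) \simeq_\Q \Baut_1(p')$ follows. The crux is precisely this degree observation — that top-degree generators never appear in the image of a positive-degree $\B$-linear derivation — after which the remaining verifications are immediate.
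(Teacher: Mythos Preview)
Your argument is correct and matches the paper's one-line proof, which simply asserts that the identity map of $\B \otimes \Lambda W$ induces a quasi-isomorphism $\dder_\B(\B \otimes \Lambda W), \D \to \dder_\B(\B \otimes \Lambda W), \D'$; you have supplied the degree count that justifies this, showing in fact that $\D = \D'$ on all of $\der^{\geq 1}_\B(\B\otimes\Lambda W)$, so the two DG Lie algebras coincide. One small expository point: your claim that the underlying graded Lie algebra is ``visibly independent'' of the differential is slightly premature, since the degree-$1$ piece $Z\der^1$ depends on which differential you take cycles for --- but your subsequent argument establishes $\D=\D'$ on $\der^1$ as well, so the truncations agree and the point is moot.
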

 \begin{proof}
The identity map  for  $\B \otimes \Lambda W $       induces a quasi-isomorphism of DG Lie algebras
 $\dder_\B(\B \otimes \Lambda W), \D \to \dder_B(\B \otimes \Lambda W), \D'$ with these hypotheses.
\end{proof}

  Now  suppose $G$ is a simply connected topological group of the homotopy type of a finite  CW complex.  This class includes the simply connected Lie groups and ensures $W = \pi_*(G) \otimes \Q$ is oddly-graded and finite-dimensional. The principal $G$-bundles  over $B$ are represented by homotopy classes $[B, BG]$. Rationalizing, this set becomes $\Hom^{-1}(W, H^*(B; \Q))$  which corresponds, in turn,  to the set  of pure differentials $D$ on $\B \otimes \Lambda W.$   When $W$ is concentrated in a single degree,    Lemma \ref{lem:equiv} implies the classifying spaces $\Baut_1(p)$ are all of the same rational homotopy type.
When $W$ is concentrated in $2$ or  more degrees the number of rational homotopy types depends on the cohomology of $B$.   Here is a first example.
\begin{proposition}
 The  principal $SU(3)$-bundles $p \colon E \to B$ over a simply connected, finite CW complex $B$    give rise to   either one or two rational homotopy types for $\Baut_1(p)$ depending,  respectively,  on whether   $H^{4}(B; \Q)$ is trivial or not.
 \end{proposition}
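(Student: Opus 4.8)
The plan is to extract the rational homotopy type of $\Baut_1(p)$ from the DG Lie algebra $\dder_\B(\B\otimes\Lambda W),\D$ supplied by Corollary~\ref{cor}, using that $SU(3)$ is rationally a product of two odd spheres. Rationally $SU(3)\simeq S^3\times S^5$, so the fibre has minimal model $\Lambda W=\Lambda(x_3,x_5)$ with $d_W=0$ and $W$ spanned by classes in degrees $3$ and $5$. A principal bundle is then recorded by a pure differential $D$ on $\B\otimes\Lambda W$, i.e. by cocycles $D(x_3)\in\B^4$ and $D(x_5)\in\B^6$ representing the rational characteristic classes $c_2\in H^4(B;\Q)$ and $c_3\in H^6(B;\Q)$. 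Since the top degree of $W$ is $n=5$ and $W_{<5}=W_3$, Lemma~\ref{lem:equiv} shows the type of $\Baut_1(p)$ depends only on $D(x_3)$, hence only on $c_2$; so I may set $c_3=0$ and must only track how the type varies with $c_2$.

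First I would compute the homology. In the pure case $\D(\theta)=D\circ\theta$, and as $\dder_\B(\B\otimes\Lambda W)=\Hom(W,\B\otimes\Lambda W)$ with $\Hom(W,-)$ exact over $\Q$, one obtains $H_*(\dder_\B(\B\otimes\Lambda W),\D)\cong\Hom(W,H^*(E;\Q))$ in positive Lie degrees. Because $W$ lives only in degrees $3,5$, a degree-$n$ derivation sees only $H^{3-n}(E)$ and $H^{5-n}(E)$; the homology is therefore concentrated in Lie degrees $1,2,3,5$ and is controlled entirely by $H^{\le 4}(E;\Q)$. A short mapping-cone computation of these low groups shows their dimensions see $c_2$ only through $c_2=0$ versus $c_2\ne 0$: for example $H^4(E)=H^4(B)/\Q\langle c_2\rangle$, while $H^3(E)=H^3(B)$ for $c_2\ne 0$ but $H^3(E)=H^3(B)\oplus\Q$ for $c_2=0$. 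In particular the graded dimensions of $\pi_*(\Baut_1(p))\otimes\Q$ jump between the two cases (e.g. in degree $3$), which already gives the lower bound that $c_2=0$ and $c_2\ne 0$ yield distinct rational homotopy types.

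The key remaining step is the matching upper bound: every nonzero $c_2$ gives the same type. Here I would note that for $c_2\ne 0$ each homology class has a cycle representative $\theta$ with $\theta(W)\subseteq\B$, the relevant groups $H^{\le 4}(E;\Q)$ being represented by cocycles in $\B$; the only class forcing a representative with a $W$-component is $x_5\mapsto x_3$, and it survives precisely when $c_2=0$. Any $\B$-valued derivation annihilates $\B$, so any two of them bracket to zero identically. Hence the span $V$ of such representatives is a sub-DG-Lie-algebra with zero differential and zero bracket, and its inclusion $V\hookrightarrow\dder_\B(\B\otimes\Lambda W),\D$ is a quasi-isomorphism. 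Thus the model is formal with abelian homology—so $\Baut_1(p)$ is a rational H-space by Corollary~\ref{cor}(B)—and, since the graded dimensions of $V$ are independent of the choice of nonzero $c_2$, all these abelian Lie algebras are isomorphic. Combining the two bounds: if $H^4(B;\Q)=0$ then $c_2=0$ is forced and there is a single type, while if $H^4(B;\Q)\ne 0$ we may realise $c_2\ne 0$ and obtain exactly two.

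The step I expect to be the main obstacle is this collapse of all nonzero $c_2$ to a single type. The delicate point is that $H^*(E;\Q)$ genuinely depends on the cup-product behaviour of $c_2$, not just on its vanishing; what saves the count is that the derivation Lie algebra only detects the truncated piece $H^{\le 4}(E;\Q)$, whose dimensions are insensitive to which nonzero $c_2$ is chosen, together with the formality argument that promotes this coincidence of homotopy groups to an honest quasi-isomorphism. Checking formality—that no higher $L_\infty$ bracket can survive—is where care is needed, but the strict (chain-level) vanishing of all brackets on the $\B$-valued representatives makes it immediate.
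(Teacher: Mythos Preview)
Your argument is correct, and the first half — reducing via Lemma~\ref{lem:equiv} to the single invariant $c_2$, and distinguishing $c_2=0$ from $c_2\neq 0$ by a rank count in $H_*(\dder_\B(\B\otimes\Lambda W),\D)$ — follows the paper closely (the paper computes the rank of $H_2$ directly rather than passing through $\Hom(W,H^*(E;\Q))$, but it is the same computation).

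Where you diverge is in the upper bound, showing that all nonzero $c_2$ yield the same type. The paper asserts a direct quasi-isomorphism $\dder_\B(\B\otimes\Lambda W),\D\to\dder_\B(\B\otimes\Lambda W),\D'$ ``sending $\chi_4\to\chi_4'$ and fixing the other generators''. For a general finite model $\B$ of $B$ this map is not clearly defined as a DG Lie algebra morphism, since $\chi_4$ need not be an algebra generator of $\B$ and an arbitrary linear substitution will not respect products or $d_B$. Your route avoids this entirely: you observe that for $c_2\neq 0$ every class of $H^{\le 4}(E;\Q)$ is represented in $\B$, so every homology class of the derivation model has a $\B$-valued cycle representative; the span $V$ of such representatives is an abelian sub-DG Lie algebra with zero differential and the inclusion $V\hookrightarrow\dder_\B(\B\otimes\Lambda W),\D$ is a quasi-isomorphism. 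Two such abelian models are isomorphic as soon as their graded dimensions agree, and those dimensions depend only on whether $c_2$ vanishes. This is both more transparent and more robust than the paper's step, and it yields the bonus that $\Baut_1(p)$ is a rational H-space whenever $c_2\neq 0$ — a fact the paper obtains only later and by other means. One small point worth making explicit in your write-up: since $B$ is simply connected you may take $\B$ with $\B^0=\Q$ and $Z^1(\B)=0$, which is what guarantees that degree-$4$ cocycles of $\B\otimes\Lambda W$ lie entirely in $\B^4$.
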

\begin{proof}
The relative minimal model for such a bundle $p$ is of the form $$\B, d_B \to \B \otimes \Lambda(s_3, s_5), D$$ where $|s_i| = i.$ If $H^4(B; \Q) = 0$ then $D(s_3) = 0$ and so the result follows from Lemma \ref{lem:equiv}.  

If $H^4(B; \Q) \neq 0$ then we may set $D(s_3) = \chi_4$ for $\langle \chi_4 \rangle \neq 0   \in H^4(B; \Q)$ and $D(s_5)= 0$.  We claim   $$\mathrm{rank}H_2(\dder_\B(\B \otimes \Lambda W) , \D)   = \mathrm{rank}H^{3}(B; \Q)$$ 
while $$\mathrm{rank}H_2( \dder_\B(\B \otimes \Lambda W) , 0) =  \mathrm{rank}H^{3}(B; \Q) +1.$$ 

For the second equation, observe  that, generally,  $$H_n( \dder_\B(\B \otimes \Lambda W) , 0)  = \Hom^n(W, H^*(B; \Q)).$$ For the first equation,  
   suppose the derivation $(s_5, x_3 + qs_3)$ is  a $\D$-cycle. Here we write $(w, \chi)$ for the derivation in $\Der_B(\B \otimes \Lambda W)$ carrying $w$ to $\chi$ and vanishing on a complementary basis for $w \in W$.  
Then  $$ 0 = \D(s_5, x_3 + q s_3) = (s_5, d_B(x_3) + q \chi_4).$$ 
Thus, since $\langle \chi_4 \rangle  \neq 0$, we must have $q = 0$.    

 Finally, if $D'$ is another pure differential with $D'(s_3) = \chi_4'$ another non-bounding cycle then the map sending $\chi_4 \to \chi_4'$ and fixing the other generators of $\B \otimes \Lambda W$
induces a quasi-isomorphim $\dder_\B(\B \otimes \Lambda W), \D \to \dder_B(\B \otimes \Lambda W), \D'.$\end{proof}

   We  give a complete result when $B$ has very simple    rational cohomology. Given a fibration $p \colon E \to B$ with relative minimal model $\B, d_B \to \B \otimes \Lambda W, D,$   and pure differential   $D,$  define a nonnegative integer
$$ n(p) = \left\{ \begin{array}{ll} \mathrm{min} \{ k \mid D(W_k) \neq 0 \}   &  \hbox{for $D(W) \neq 0.$}  \\
n  &  \hbox{for $D(W)= 0.$}  \end{array}
\right.
$$
Example \ref{example1} is contained in the following:
 \begin{proposition} \label{classify} Let $B$ be a simply connected finite CW complex with $H^*(B; \Q)$ generated in a single even degree. Let $G$ be a simply connected topological group with the homotopy type of a finite CW complex. 
 Given two principal $G$-bundles $p, q$ over $B$  we have
 $$ \Baut_1(p) \simeq_\Q \Baut_1(q) \ \ \hbox{ if and only if } \ \ n(p) = n(q).$$
 \end{proposition}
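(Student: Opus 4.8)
The plan is to use \corref{cor}(A) to translate the statement into the existence, or non-existence, of a quasi-isomorphism between the derivation DG Lie algebras $\dder_\B(\B\otimes\Lambda W),\D$ and $\dder_\B(\B\otimes\Lambda W),\D'$ attached to the two bundles, where $\D(\theta)=D\circ\theta$ and $\D'(\theta)=D'\circ\theta$ because $D,D'$ are pure. The computational backbone is the homology identity
$$H_*\big(\dder_\B(\B\otimes\Lambda W),\D\big)\;\cong\;\Hom\big(W,H^*(E;\Q)\big),$$
which I would establish by noting that, under $\der_\B(\B\otimes\Lambda W)=\Hom(W,\B\otimes\Lambda W)$, the differential $\D$ is just post-composition with $D$; since the source $W$ carries no differential, the homology is $\Hom(W,H(\B\otimes\Lambda W,D))=\Hom(W,H^*(E;\Q))$. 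This recovers the rank computations used in the preceding proposition. Throughout I use that $H^*(B;\Q)=\Q[x]/(x^{m+1})$ with $|x|=2d$, so that every nonzero cohomology group is one-dimensional, spanned by a power of $x$.

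For the implication $n(p)=n(q)\Rightarrow\Baut_1(p)\simeq_\Q\Baut_1(q)$, I would first treat the generic case $n(p)=n_0<n$ by putting $D$ in a normal form depending only on $n_0$, generalizing \lemref{lem:equiv}. Choose $w_0\in W_{n_0}$ with $D(w_0)=\lambda\,x^{k_0}\neq0$, $k_0=(n_0{+}1)/2d$. Since $H^{n_0+1}(B;\Q)$ is one-dimensional I may change basis in $W_{n_0}$ so that $D$ is supported on $w_0$ there; and for each higher generator $w$ with $D(w)=\mu\,x^{k_w}\neq 0$ (necessarily $k_w>k_0$) the substitution $w\mapsto w-(\mu/\lambda)\,x^{k_w-k_0}w_0$ — a degree-preserving, unitriangular, hence invertible $\B$-algebra automorphism — kills $D(w)$, because $D(x^{k_w-k_0}w_0)=\lambda\,x^{k_w}$. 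Rescaling $w_0$, this reduces every such $D$ to the standard differential $\sigma_{n_0}$ with $D(w_0)=x^{k_0}$ and $D=0$ elsewhere; the resulting DG-algebra isomorphism conjugates $\dder_\B,\D$ onto $\dder_\B,\D_{\sigma_{n_0}}$, so any two bundles with equal $n(p)=n_0<n$ have isomorphic models. The boundary value $n(p)=n$ (either $D=0$, or $D$ supported only on $W_n$) I would handle by a truncation argument: since $\D\theta(w)=D(\theta(w))$ and $D$ is nonzero only on $W_n$, a nonzero $\D\theta$ forces $\theta(w)$ to contain a top-degree generator, whence $|\theta|=|w|-n\le 0$; thus $\D$ vanishes on the positive-degree part of $\dder_\B$, and the truncated model coincides identically with that of the trivial bundle. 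Hence all bundles with $n(p)=n$ are equivalent as well, and I set $\sigma_n$ equal to the trivial model.

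For the converse it suffices, by the previous paragraph, to separate the standard models $\sigma_{n_0}$ by a rational invariant, since every $p$ satisfies $\Baut_1(p)\simeq_\Q\Baut_1(\sigma_{n(p)})$. A quasi-isomorphism forces equality of the ranks $\dim_\Q\pi_k(\Baut_1)\otimes\Q=\dim_\Q\Hom^k(W,H^*(E;\Q))$ for all $k$, and these already determine $n_0$. For $\sigma_{n_0}$ with $n_0<n$ one computes $H^*(E;\Q)=H(\B\otimes\Lambda w_0,D)\otimes\Lambda(W\setminus\langle w_0\rangle)$, in which exactly the classes $x^{k_0},\dots,x^m$ and $w_0,\dots,x^{m-k_0}w_0$ are lost relative to the trivial model; the lowest-degree lost class is $w_0$, in degree $n_0$, and feeding it through $\Hom(W,-)$ the largest $k$ at which the rank falls below the trivial value is $k=n-n_0$. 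Since all other lost classes sit in strictly higher degree, the ranks of $\sigma_{n_0}$ never exceed those of the trivial model, the top degree of a rank drop is $n-n_0$, and the trivial model $\sigma_n$ is the unique one with no drop at all. Thus the ranks recover $n_0$, and equal rational homotopy types force $n(p)=n(q)$.

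The step I expect to be the main obstacle is the boundary value $n(p)=n$: here no change of variables is available, and one must argue directly that the top-degree part of $D$ is invisible to $\Baut_1(p)$. The truncation observation above is what makes this work, and it is precisely the point where the positive-degree (connected) nature of the Quillen model $\dder_\B$ is essential — it is also why the naive model $\der_\B$ would not suffice. A secondary technical point is that for a non-formal base $B$ one should run the normal-form reductions with a fixed cocycle representative $X$ of the generator $x$ in a finite model $\B,d_B$; since only the cohomology classes $[D(w)]$ enter the pure differential, this is routine but must be checked.
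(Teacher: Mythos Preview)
Your proposal is correct and follows essentially the same route as the paper: reduce each bundle to a normal form where $D$ is supported on a single generator $w_0$ of degree $n(p)$ via the change of variables $w\mapsto w-(\mu/\lambda)x^{k_w-k_0}w_0$, and then distinguish the normal forms by a rank count at degree $n-n_0$. Your packaging of the rank argument through the identity $H_*(\dder_\B(\B\otimes\Lambda W),\D)\cong\Hom(W,H^*(E;\Q))$ is a tidier version of the paper's cycle chase, and your explicit treatment of the boundary case $n(p)=n$ via truncation is exactly the content of \lemref{lem:equiv}; one small remark is that your final caveat about a non-formal base is moot here, since a simply connected finite complex with truncated polynomial cohomology is automatically formal, so $\B=\Q[x]/(x^{m+1})$ with $d_B=0$ may be taken from the start.
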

\begin{proof}
Since $B$ is finite,  $H^*(B; \Q)$ is a truncated polynomial algebra. A finite DG algebra model for $B$ is of the form $\B = \Lambda (x)/\langle x^{m+1}  \rangle$ for some $x$ of even degree, say $2k.$
Given a principal $G$-bundle $p \colon E \to B$  , the  relative minimal model for $p$ is thus of the form
$$ \B, 0   \to \B \otimes \Lambda W, D$$
with  $D(x)  = 0$ and $D(w) = \chi(w)$  for $\chi(w)$ a cycle in $\B$ of degree $|w| +1.$
   Note that if  $\chi(w) \neq 0$ then $\chi(w) = qx^j$ for some $j$ and $q \in \Q.$
We may thus write $$W = \Q(w_{1}, \ldots, w_{s}) \oplus W'$$  where  (i) the $w_i$ are of strictly increasing odd degrees $q_i$, (ii) $D(w_i) = x^{n_i}$ for some $n_i \leq n$  and (iii) $D(W') = 0.$   Define a second differential $D'$  by setting $D'(w_1) = x^{n_1},$    $D'(w_i) = 0$ for $i \geq 2$ and $D'(W') =0.$
 Define
$$\psi \colon \B  \otimes \Lambda W, D' \to \B \otimes \Lambda W, D$$    by $\psi(w_1) = w_1$,  $\psi(w_i) = w_i - x^{n_i - n_1}w_1$ for $i \geq 2$ and
$\psi$ the identity on $\Lambda W'$ and $\B$  The map $\psi$ induces a quasi-isomorphim $\dder_\B(\B \otimes \Lambda W), \D' \to \dder_B^*(\B \otimes \Lambda W), \D.$ 

Suppose now that $D$ and $D'$ are differentials on $\B \otimes \Lambda W$     given, on $W$, as $D = (w, x^{k})$ and $D' = (w', x^l)$ for $k,l \leq m.$  Suppose    $|w| \neq |w'|$, say $|w| < |w'|.$  Let $n$ be the maximal nontrivial degree of $W.$ Then we see
$$\mathrm{rank}H_{n-|w|} (\dder_\B(\B \otimes \Lambda W; D)) < \mathrm{rank}H_{n-|w|} (\dder_\B(\B \otimes \Lambda V; D'))$$
since $(w_n, w)$ is a non-bounding $D'$-cycle  but is  not a $D$-cycle for $w_n \in W_n.$
\end{proof}

Applying the functor $\CC^*,$ we can identify the rational homotopy types represented by $\Baut_1(p)$ for fixed $F$ and $B$ in special cases.
We give a representative example:
\begin{example} \label{C*}
We consider the case $F = S^5 \times S^7$ and $B =S^3 \times S^3.$  A finite-dimensional DG algebra model $\B = \Lambda(x_3, y_3)$ in this case.  The relative minimal model for such a fibration is of the form
$$ \B, 0  \to  \B \otimes \Lambda(s_5, s_7), D.$$
  Up to rational fibre homotopy equivalence, there are two possibilities for $D$.  Either $D = 0$ or
$D(s_5) = x_3y_3$ and $D(s_7) = 0$.  In both cases, we have the following basis for $\dder_\B(\B \otimes \Lambda(s_5, s_7)).$
$$ \alpha = (s_7, x_3y_3), \ \ \ \ \beta_1 = (s_5, x_3),  \ \ \ \ \beta_2 = (s_5, y_3),  \ \ \ \  \beta_3 = (s_7, s_5) $$
$$ \gamma_1 = (s_7, x_3), \ \ \ \ \gamma_2 = (s_7, y_3), \ \ \ \ \eta = (s_5, 1), \ \ \ \ \phi = (s_7, 1).$$
Here $|\alpha| = 1, |\beta_i| = 2, |\gamma_i| = 4, |\eta| = 5$ and $|\phi| = 7$.
Thus $\CC^*(\dder_\B(\B \otimes \Lambda(s_5, s_7)), \D)$ is a DG algebra of the form:
$$\Lambda(a,   b_1, b_2, b_3, c_1, c_2,e, f), d$$
where $|a| = 2, |b_i| = 3, |c_i| = 5, |e| = 6$ and $|f| = 8$.

When $D= 0$, the induced differential $\D = 0$ and so the the differential $d$ is given, in this case, by
$$ d(a) = d(b_1) = d(b_2) = d(b_3) = d(e) = 0,  \ \  \ \  d(c_1) = b_3b_1, \ \  \ \  d(c_2)=b_3b_2, \ \ \ \ d(f) =b_3c. $$
It is easy to see that $\Baut_1(p)$ is not a formal space in this case.

When $D \neq 0$ the differential $\D$ is trivial except that
$ \D(\beta_3) = \alpha.$
Removing this linear relation from $\dder_\B(\B \otimes \Lambda(s_5, s_7)), \D$ we see that
$$\Baut_1(p) \simeq K(\Q^2, 3) \times K(\Q^2, 5) \times K(\Q, 6) \times K(\Q, 8)$$
 in this case.
\end{example}

\subsection{When is $\Baut_1(p)$ a rational H-space?}  We give some sample results addressing this question.
By  Theorem \ref{thm:H}, $\Baut_1(p)$ is a rational H-space when  $\Baut_1(F)$ is one.  In addition to the spaces mentioned in Remark \ref{rems} (1), this occurs when $\pi_*(F) \otimes \Q$ is concentrated in a single degree.   When $\pi_*(F) \otimes \Q$ is concentrated in two degrees, we have the following result. Recall a simply connected CW complex $X$  is {\em coformal} if $X$ admits a  Quillen model with vanishing differential, or equivalently, if  the minimal Sullivan model for $X$ has purely quadratic differential.
 \begin{proposition} \label{coformal} Let $p \colon  E \to B$ be a  fibration of simply connected CW complexes with fibre $F$ and base  $B$ finite.  Suppose $\pi_*(F) \otimes \Q$ is concentrated in two degrees.  Then $\Baut_1(p)$ is a coformal space and $\Baut_1(p)$ is a rational H-space if and only if  $\aut_1(p)$ is rationally H-homotopy commutative.
 \end{proposition}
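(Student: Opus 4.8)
The plan is to reduce the ``if and only if'' clause to the coformality assertion, so I would prove coformality first and harvest the equivalence from it. Writing $L=\dder_\B(\B\otimes\Lambda W),\D$, which models $\Baut_1(p)$ by \thmref{thm:main}, suppose we have shown $L$ is formal as a DG Lie algebra, i.e.\ $L\simeq (H_*(L),0)$. Then by \corref{cor}(B) the space $\Baut_1(p)$ is a rational H-space exactly when $L$ is quasi-isomorphic to an abelian DG Lie algebra, which (given formality) happens iff the induced bracket on $H_*(L)=H_*(\der_\B(\B\otimes\Lambda W))$ vanishes. By the Samelson isomorphism \eqref{eq:Samelson} this bracket is precisely the Samelson bracket on $\pi_*(\aut_1(p))\otimes\Q$, whose vanishing is the definition of rational H-homotopy commutativity of $\aut_1(p)$. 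Thus the entire statement follows once coformality is established.

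For coformality I would introduce a weight grading and extract the crucial structural fact forced by the two-degree hypothesis. Let $m<n$ be the two degrees with $\pi_*(F)\otimes\Q\neq 0$, so $W=W_m\oplus W_n$, and grade $\B\otimes\Lambda W$ by the number of $W_n$-factors. A degree argument is decisive: any $\theta\in\dder_\B(\B\otimes\Lambda W)$ lowers degrees by some $k\geq 1$, so $\theta(W_m)\subseteq(\B\otimes\Lambda W)_{m-k}$ forces $\theta(W_m)\subseteq\B$, and $\theta(W_n)\subseteq(\B\otimes\Lambda W)_{n-k}$ forces $\theta(W_n)\subseteq\B\otimes\Lambda W_m$. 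Hence every derivation has image of $W_n$-weight $0$, and $L$ carries only the two weights $0$ and $-1$, giving $L=L_{(0)}\oplus L_{(-1)}$ where $L_{(0)}$ (resp.\ $L_{(-1)}$) consists of derivations vanishing on $W_n$ (resp.\ on $W_m$). Since the bracket is weight-additive, $[L_{(-1)},L_{(-1)}]=0$ (weight $-2$ is absent) and a direct check using $\theta(\B)=0$ gives $[L_{(0)},L_{(0)}]=0$ as well; thus \emph{both} summands are abelian, $L_{(-1)}$ is an abelian ideal, and $L_{(0)}$ acts on it by composition. Correspondingly $\D=\D_{(0)}+\D_{(-1)}$, with $\D_{(0)}$ weight-preserving (built from $d_B$ and the Euler-type twists $d_T|_{W_m}$) and $\D_{(-1)}$ of pure weight $-1$ (built from the fibre differential $d_W$ and $d_T|_{W_n}$), and $\D_{(-1)}$ vanishing on $L_{(-1)}$.

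With this in hand I would run the spectral sequence of the increasing weight filtration. Because there are only two weights, $d_0=\D_{(0)}$, $d_1=\overline{\D_{(-1)}}$, and all higher differentials would shift weight by at least $2$ and hence vanish, so the sequence degenerates at $E_2$ and computes $H_*(L)\cong H\big(H(L,\D_{(0)}),\overline{\D_{(-1)}}\big)$. Moreover $E_1=\big(H(L,\D_{(0)}),\overline{\D_{(-1)}}\big)$ is a weight-graded DG Lie algebra whose differential is now homogeneous of weight $-1$; a DG Lie algebra whose differential is weight-homogeneous of nonzero weight on a bounded weight grading is intrinsically formal, so $E_1\simeq(H_*(L),0)$. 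The coformality of $\Baut_1(p)$ therefore comes down to lifting this to a DG Lie algebra quasi-isomorphism $(L,\D)\simeq E_1$, i.e.\ to showing that passing to $\D_{(0)}$-homology transfers the Lie structure without creating higher operations.

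That transfer is the step I expect to be the main obstacle. Most potential secondary (Lie--Massey) products are eliminated for free by the weight bookkeeping: any product with two or three inputs of weight $-1$ lands in the absent weight $\leq -2$, and any product with all inputs of weight $0$ vanishes because $L_{(0)}$ is abelian. The one genuinely delicate case is a relation $[a,b]=\D x$ with $a\in L_{(-1)}$, $b\in L_{(0)}$ and $x\in L_{(-1)}$, where the secondary product $[x,c]$ could a priori survive in $H_*(L)$. I would dispose of this by exploiting the explicit form of the data: the $L_{(0)}$-action and the bounding element $x$ are both given by composition of derivations with image in $\B\otimes\Lambda W_m$, so the relevant iterated compositions reorganize into a $\D$-boundary and the product is null. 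Equivalently, one shows directly that $E_2=E_\infty$ already carries the full bracket of $H_*(L)$ with no residual higher structure. Once this vanishing is verified, one has $(L,\D)\simeq(H_*(L),0)$, so $\Baut_1(p)$ is coformal and, by the reduction of the first paragraph, the stated equivalence follows.
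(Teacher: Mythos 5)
Your first paragraph is sound and is exactly the paper's reduction: once coformality is known, the equivalence of ``rational H-space'' with ``rationally H-homotopy commutative loop space'' is standard, and the paper invokes it in the same form. The problem is that the coformality half of your argument is never actually completed, and the two places where you defer the work are precisely where the content of the proposition lies. First, the assertion that $E_1=\bigl(H(L,\D_{(0)}),\overline{\D_{(-1)}}\bigr)$ is ``intrinsically formal'' because its differential is weight-homogeneous of nonzero weight on a bounded grading is not a theorem: a DG Lie algebra with weights $0,-1$, abelian weight pieces and a weight $(-1)$ differential can still carry nontrivial Lie--Massey products --- your own ``delicate case'' $[a,b]=\D x$ with $a\in L_{(-1)}$, $b,x$ producing a surviving secondary bracket is exactly such a product --- so nothing is gained by passing to $E_1$. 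Second, the disposal of that delicate case (``the relevant iterated compositions reorganize into a $\D$-boundary'') is an assertion, not an argument; no bounding derivation is exhibited, and it is not clear how to produce one in general. As written, the proof stops at the point where coformality would have to be established.

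For comparison, the paper's proof is far more direct: it asserts that $\dder_\B(\B\otimes\Lambda W)$ admits no nonzero iterated brackets of length three, i.e.\ is two-step nilpotent as a graded Lie algebra, after which the minimal model of $\CC^*$ is easily seen to have purely quadratic differential. Your weight decomposition only yields the weaker statement that $L_{(0)}$ and $L_{(-1)}$ are each abelian (metabelian, not nilpotent), and indeed when $W=W_m\oplus W_n$ with $n\geq 2m$ (e.g.\ $F=\C P^3$, $m=2$, $n=7$) the iterated brackets $[(w_m,1),[(w_m,1),(w_n,w_m^2)]]$ are visibly nonzero, so triple brackets do not vanish for degree reasons alone and your caution is warranted --- when $n<2m$ one has $\theta(W_n)\subseteq\B\oplus\B\otimes W_m$ and the paper's length-two claim does hold, collapsing everything. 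But having correctly identified that the general case is more delicate, you would need to actually carry out the transfer/obstruction computation rather than gesture at it; the proposal as it stands has a genuine gap.
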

 \begin{proof} In this case, the DG Lie algebra $\dder_\B(\B \otimes \Lambda W), D$ only admits iterated brackets of length  two. It is an easy exercise now to show that a minimal model for  $C^*(\dder_\B(\B \otimes \Lambda W), D)$  has purely quadratic differential.
 In general,  if $X$ is a coformal space then $X$ is a rational H-space if and only if the $\Omega X$ is rationally H-homotopy commutative.
\end{proof}
Focusing   on a concrete example, we have:
\begin{proposition}
Let $p \colon E \to \C P^m$ be a principal $SU(n)$-bundle.  Then $\Baut_1(p)$ is a rational H-space if and only if either $n =  1,  2$  or $n = 3$ and the characteristic class $\chi_1(p) \neq 0 \in H^4(\C P^m; \Q)$.
\end{proposition}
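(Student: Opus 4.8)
The plan is to reduce the statement, via \corref{cor}(B), to deciding when the homology Lie algebra $H_*(\dder_\B(\B\otimes\Lambda W),\D)$ is abelian. I would fix the model as follows: since $H^*(\C P^m;\Q)=\Q[x]/\langle x^{m+1}\rangle$ with $|x|=2$, take $\B=\Q[x]/\langle x^{m+1}\rangle$ with $d_B=0$, and since $SU(n)$ is rationally a product of odd spheres, take $\Lambda W=\Lambda(s_3,s_5,\dots,s_{2n-1})$ with $d_W=0$. The relative model then has pure differential $D(s_{2i+1})=c_i\,x^{i+1}$ for scalars $c_i$ (with $c_i=0$ once $i+1>m$), so $\D(\theta)=D\circ\theta$, and the first characteristic class is $\chi_1(p)=D(s_3)=c_1x^2$. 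Throughout I write $(w,\chi)$ for the derivation sending $w\mapsto\chi$ and vanishing on a complementary basis. The cases $n=1,2$ are immediate: $SU(1)$ is trivial and $SU(2)=S^3$ has $\pi_*\otimes\Q$ concentrated in the single degree $3$, so $\Baut_1(SU(n))$ is a rational H-space and \thmref{thm:H} applies.

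For $n=3$ I would invoke \propref{coformal}: since $\pi_*(SU(3))\otimes\Q$ sits in degrees $3,5$, the space $\Baut_1(p)$ is coformal and is a rational H-space exactly when $H_*(\dder_\B(\B\otimes\Lambda W),\D)$ is abelian. The structural point is that degree counting forces the \emph{only} derivation in $\dder_\B$ able to produce a nonzero bracket to be the mixing derivation $\sigma=(s_5,s_3)\in\der^2_\B$, and moreover $\der^2_\B=\Q\,\sigma$. Now $\D\sigma=(s_5,c_1x^2)$, so $\sigma$ is a $\D$-cycle iff $c_1=0$. When $c_1\neq0$ no cycle is a mixing derivation, every bracket of cycles vanishes, and $\Baut_1(p)$ is a rational H-space; when $c_1=0$ the cycle $\sigma$ and the (always) cycle $\phi_1=(s_3,1)$ satisfy $[\sigma,\phi_1]=\pm(s_5,1)$, which is non-bounding for degree reasons, so $H_*$ is non-abelian. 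This is precisely the criterion $\chi_1(p)\neq0$.

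For $n\geq4$ the plan is to show $H_*(\dder_\B,\D)$ is \emph{never} abelian, so by \corref{cor}(B) the space is never a rational H-space. The uniform tool is that each $\phi_i=(s_{2i+1},1)$ is a cycle and is non-bounding: any $\D$-boundary $\D\psi=D\circ\psi$ vanishes on $s_{2i+1}$ because $\psi(s_{2i+1})$ would sit in negative degree. I would split on $c_1$. If $c_1=0$, then $\sigma=(s_5,s_3)$ is a cycle and $[\sigma,\phi_1]=\pm\phi_2\neq0$ in homology. If $c_1\neq0$, then $(s_5,s_3)$ is no longer a cycle, and using $s_7$ (present exactly because $n\geq4$) I would pass to the corrected combination $z=(s_7,\,c_1s_5-c_2x\,s_3)$, which satisfies $\D z(s_7)=c_1D(s_5)-c_2D(x s_3)=c_1c_2x^3-c_1c_2x^3=0$, hence is a cycle for every value of $c_2$; then $[z,\phi_2]=\pm c_1\phi_3\neq0$ in homology. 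In either case a Samelson bracket survives.

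The hard part is exactly this last case: for $n\geq4$ with the lower characteristic classes nonzero, the naive mixing derivations fail to be $\D$-cycles, so one cannot simply read off a nonzero bracket. The resolution is to manufacture a corrected mixing cycle such as $z$, whose bracket with an augmentation derivation $\phi_i$ lands on a top class $\phi_{i+1}=(s_{2i+1},1)$ that the degree argument guarantees to be non-bounding. Assembling the pieces, namely $n=1,2$ by \thmref{thm:H}, $n=3$ by \propref{coformal} together with the $\sigma$-computation, and $n\geq4$ by the non-abelian brackets above, yields the stated equivalence that $\Baut_1(p)$ is a rational H-space iff $n=1,2$, or $n=3$ and $\chi_1(p)\neq0$.
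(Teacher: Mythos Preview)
Your argument is correct. For $n\leq 3$ it coincides with the paper's proof, and your invocation of \propref{coformal} makes explicit why abelian homology suffices in the $n=3$ case.

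For $n\geq 4$ you take a genuinely different route. The paper invokes the change-of-model argument from the proof of \propref{classify}: one passes to a quasi-isomorphic relative model in which $D(s_{2i+1})\neq 0$ for at most one $i$, so some $s_{2i-1}$ with $2\leq i<n$ satisfies $D(s_{2i-1})=0$, and then $[(s_{2n-1},s_{2i-1}),(s_{2i-1},1)]=(s_{2n-1},1)$ is a non-bounding bracket of $\D$-cycles. You instead work in the original model, split on whether $c_1=0$, and in the case $c_1\neq 0$ directly build the corrected cycle $z=(s_7,\,c_1s_5-c_2xs_3)$ whose bracket with $\phi_2$ hits the non-bounding class $\phi_3$. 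The paper's approach is more uniform and reuses the earlier classification result; yours is self-contained and avoids the auxiliary quasi-isomorphism, at the cost of a small case split. In fact your $z$ is essentially the image of the naive mixing derivation under the paper's change of basis, so the two arguments are closely related underneath.
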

\begin{proof}
When $ n= 1$ the fibre is contratcible and so $\aut_1(p)$ is also.  For $n = 2,$ the result is Theorem \ref{thm:H}, as mentioned above.  For $n=3,$ observe that the induced bracket in  $\dder_\B(\B \otimes \Lambda (s_3, s_5)),  \D$ is nontrivial if and only if the derivation $(s_5, s_3)$ is a $\D$-cycle or, equivalently,    $D(s_3) = 0.$  For $n > 3,$ we invoke the proof of Proposition \ref{classify}.  There we showed that, in the relative minimal model $\B, d_B\to \B \otimes \Lambda(s_3, \ldots, s_{2n-1}),$ we may assume $D(s_{2i +1}) \neq 0$ for  at most   one $i \geq 1$.  Thus $D(s_{2i-1}) = 0$ for some   $2 \leq i < n$.  The bracket $$[(s_{2n-1}, s_{2i-1}), (s_{2i-1}, 1)] = (s_{2n-1}, 1)$$ is then  a non-bounding bracket of $\D$-cycles.  Thus $\pi_*(\aut_1(p)) \otimes \Q$ has a non-trivial Samelson bracket for $n > 3$.     \end{proof}

Finally, we give a connection between Massey products in the rational cohomology of $E$  and nontrivial brackets in the Samelson Lie algebra of $\aut_1(p)$.   Let $p \colon E \to B$ be a principal $G$-bundle where, as usual, $B$  is a finite, simply connected CW complex and $G$ is a simply connected topological group of the homotopy type of a finite CW complex.  Let $j \colon G \to E$ denote the fibre inclusion.  As in Example \ref{example2}, we consider the map
$$J = \mathrm{Hur}_\Q \circ j_\sharp \colon \pi_*(G)   \to H_*(E; \Q)$$
where $\mathrm{Hur}_\Q \colon \pi_*(E) \to H_*(E; \Q)$ is the rational Hurewicz map.   We may identify the image of $J$ in terms of the relative minimal model  $\B, d_B \to \B \otimes \Lambda W, D$  for $p$.  Recall that  $D$  is pure.  Define $$\mathrm{ker}[D_W] = \{ w \in W \mid D(w) \hbox{\, is cohomologous to zero in \,} \B \otimes \Lambda W_{< |w|}, D \}.$$
Note that $\mathrm{ker}[D_W]$ consists of $D$-cycles of the Sullivan model $\B \otimes \Lambda W, D$ for $E$ and thus determines a subspace $K \subseteq H^*(E; \Q)$.
Then we have
\begin{equation} \label{J} \mathrm{Image}(J)  \cong \Hom(K; \Q). \end{equation}
To see this, observe that an element $w \in \mathrm{ker}[D_W]$ corresponds, after a change of basis, to a non-bounding primitive  $D$-cycle  of  generator of the Sullivan model of $E$ and thus to an element in the image of the rational Hurewicz map.
In particular, $J = 0$ if and only if $\mathrm{ker}[D_W] = 0.$
\begin{lemma}  Let $p$ have a relative minimal model as described above. If $\Baut_1(p)$ is a rational H-space then $$\mathrm{Image}(J) \cap H_{< n}(E; \Q) = 0$$ where $n$ is the maximal degree for which $W$ is nontrivial.
\end{lemma}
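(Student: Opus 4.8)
The plan is to prove the contrapositive: assuming there is a nonzero element of $\mathrm{Image}(J)$ in degree strictly less than $n$, I will exhibit a nontrivial Samelson bracket in the homology Lie algebra $\pi_*(\aut_1(p)) \otimes \Q \cong H_*(\dder_\B(\B \otimes \Lambda W), \D)$. By Corollary \ref{cor}(B) a rational H-space forces this DG Lie algebra to be quasi-isomorphic to an abelian one, and hence forces the induced bracket on homology to vanish; producing a single nonvanishing bracket therefore shows $\Baut_1(p)$ is not a rational H-space. Throughout I use that $D$ is pure, so that $\D = D \circ (-)$ on $\dder_\B(\B \otimes \Lambda W)$, and the notation $(v, \chi)$ for the derivation sending a generator $v$ to $\chi$ and vanishing on a complementary basis.

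First I would translate the hypothesis into model data. By the identification \emph{(\ref{J})} and the discussion following it, a nonzero class in $\mathrm{Image}(J) \cap H_{<n}(E;\Q)$ corresponds to an element $w \in \mathrm{ker}[D_W]$ with $|w| < n$. Since $D$ is pure, $D(w) \in \B$; the defining condition $w \in \mathrm{ker}[D_W]$ says $D(w)$ is $D$-exact, so after the triangular change of generators $w \mapsto w - u$ with $u \in \B \otimes \Lambda W_{<|w|}$ I may assume $w$ itself is a $D$-cycle, $D(w) = 0$, still of degree $|w| < n$ and still representing a nonzero class in $H^{|w|}(E;\Q)$. Fixing a nonzero top-degree generator $w_n \in W_n$, I then form the derivations $\theta_1 = (w_n, w)$, of degree $n - |w| \geq 2$, and $\theta_2 = (w, 1)$, of degree $|w| \geq 3$; both therefore lie in the untruncated range of $\dder_\B(\B \otimes \Lambda W)$. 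Since $D$ is pure, $\D(\theta_1) = (w_n, D(w)) = 0$ and $\D(\theta_2) = (w, D(1)) = 0$, so both are cycles. A direct commutator computation gives $\theta_1 \circ \theta_2 = 0$ (as $\theta_2(w) = 1$ and $\theta_1(1) = 0$) and $\theta_2 \circ \theta_1 = (w_n, 1)$ (as $\theta_1(w_n) = w$ and $\theta_2(w) = 1$), whence $[\theta_1, \theta_2] = \pm (w_n, 1)$.

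It then remains to see that $(w_n, 1) \in \dder^n_\B(\B \otimes \Lambda W)$ is a non-bounding cycle. It is a cycle, since $\D(w_n, 1) = (w_n, D(1)) = 0$. For the bounding question, the key point is the maximality of $n$: any derivation of degree $n+1$ vanishing on $\B$ sends each generator $v$ (all of degree $\leq n$) into $(\B \otimes \Lambda W)^{|v| - (n+1)} \subseteq (\B \otimes \Lambda W)^{<0} = 0$, so $\dder^{n+1}_\B(\B \otimes \Lambda W) = 0$ and there are no $\D$-boundaries in degree $n$. Hence $(w_n, 1)$ represents a nonzero class, and the induced bracket $[[\theta_1], [\theta_2]] = \pm[(w_n, 1)]$ is nontrivial, contradicting rationality of $\Baut_1(p)$ as an H-space. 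The one step demanding genuine care is the change-of-basis reduction to $D(w) = 0$: I must confirm that replacing $w$ by the cycle $w - u$ is a legitimate choice of relative-model generators (triangular with respect to the $W$-filtration), so that $(w_n, w)$ and $(w, 1)$ remain honest derivations. Everything else is degree bookkeeping, and the decisive fact — that $(w_n, 1)$ cannot bound — is handed to us for free by the maximality of $n$, which is exactly why the conclusion is stated only for $H_{<n}(E;\Q)$.
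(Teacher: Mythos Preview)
Your argument is correct and follows essentially the same route as the paper's proof: translate the hypothesis via (\ref{J}) to obtain a generator $w$ of degree $<n$ whose differential is exact, perform a triangular change of basis so that $D(w)=0$, and then exhibit the non-bounding bracket $[(w_n,w),(w,1)] = \pm(w_n,1)$ of $\D$-cycles. The only cosmetic differences are that the paper phrases the change of basis as an explicit DG algebra map $(\B\otimes\Lambda W, D') \to (\B\otimes\Lambda W, D)$ rather than as a change of generators in a single model, and that you spell out both why $(w_n,1)$ cannot bound (vanishing of $\dder^{n+1}_\B$) and the appeal to Corollary~\ref{cor}(B), points the paper leaves implicit.
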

\begin{proof} Using (\ref{J}),    suppose $D(w) = D(\chi)$ for some $w \in   W_{q}$ with $q < n$ and $\chi \in \B \otimes \Lambda W_{< q}$.  Then we may perform a basis change:  Define $\varphi \colon \B \otimes \Lambda W \to \B \otimes \Lambda W$  by sending $w \to w - \chi$ and fixing the other elements of $W$ and all of $\B$.  Define $D'$ by setting $D'(w) = 0$ and $D' = D$ on a complement  of $\Q(w) \subseteq W$ and on $\B.$  The result is a DG map inducing a quasi-isomorphism $ \dder_\B(\B \otimes \Lambda W), \D' \to \dder_\B(\B \otimes \Lambda W), \D$. In the former DG Lie algebra, we have $$[(w_n, w), (w,1)] = (w_n, 1), $$ a
non-bounding bracket of $\D'$-cycles  for $w_n \in W_n$.
\end{proof}

To discuss Massey products in $H^*(E; \Q)$ we need  $\B \otimes \Lambda W, D$ to be a minimal Sullivan model for $E.$  By \cite[Pro.4.12]{Hal},  this is equivalent to requiring $p \colon E \to B$ to be {\em Whitehead trivial},  i.e., having trivial linking homomorphism  in its long exact rational  homotopy sequence.   In the setting of principal bundles, Whitehead triviality is equivalent to the assumption that the differential $D$ satisfies $$D(W) \subseteq H^{+}(B; \Q) \cdot H^{+}(B; \Q).$$

We recall the characterization of formality in terms of  Massey products in  \cite{DGMS}.  Let $\Lambda V, d$ be a simply connected DG algebra.  Let $C  \subseteq V $ denote the cycles of $V$.  By a {\em Massey product} in $H^*(\Lambda V, d)$ we mean a
nonzero element represented by an cycle in $I(V') \subseteq \Lambda V$ where $V'$ is some vector space complement to $C \subseteq V$ and $I(V') \subseteq \Lambda V$
is  the ideal generated by $V'$. By \cite[Lem.4.3]{DGMS}, $\Lambda V, d$ is formal if and only if all Massey products vanish.
The following result implies Example \ref{example2}.

\begin{theorem}   Let   $B$ be a finite simply connected CW complex and $G$ a simply connected topological group of the homotopy type of a finite CW complex.  Let $ p \colon E \to B$ be a Whitehead trivial, principal $G$-bundle.    Let $n$ be the  maximal degree for which $\pi_n(G) \otimes \Q \neq 0.$ Suppose  (i)  $J  \colon \pi_*(G) \to H_*(E; \Q)$ vanishes in degrees $ < n$ and  (ii)  $H^{*}(E; \Q)$ has no Massey products of degree $ < n$. Then $\Baut_1(p)$ is a rational H-space. \end{theorem}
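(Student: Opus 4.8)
The plan is to verify condition (ii) of Corollary \ref{cor}(B): I will show that $L := \dder_\B(\B \otimes \Lambda W),\D$ is quasi-isomorphic to an abelian DG Lie algebra. The first step exploits purity. Since $p$ is principal, $D$ is pure, and since the fibre $G$ is rationally a product of odd spheres we have $d_W = 0$, so $D$ restricts to a map $\tau \colon W \to \B$ and $\D(\theta) = D \circ \theta$. As $\theta(\B) = 0$ and $W$ carries no internal differential, the complex $(\der_\B(\B\otimes\Lambda W),\D)$ is just $\Hom(W, (\B\otimes\Lambda W, D))$ with differential given by post-composition with $D$. Because $p$ is Whitehead trivial, $\B\otimes\Lambda W, D$ is the minimal Sullivan model of $E$, whence in positive degrees
\[ H_*(L) \cong \Hom(W, H^*(E;\Q)), \]
recovering the Samelson Lie algebra of \emph{(\ref{eq:Samelson})}. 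It then remains to compute the induced bracket and the higher Lie--Massey operations on this space.

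Second, I analyze $H^*(E;\Q)$ under hypothesis (i). By \emph{(\ref{J})}, $\mathrm{Image}(J)$ is dual to the subspace of $H^*(E;\Q)$ determined by $\mathrm{ker}[D_W]$, so (i) says $\mathrm{Image}(J)$ is concentrated in the top degree $n$; dually, $\tau(w)$ is not a coboundary for any generator $w$ with $|w| < n$, i.e. $\mathrm{ker}[D_W] \subseteq W_n$. I then show the induced binary bracket on $H_*(L)$ vanishes. The commutators that could yield a non-bounding class are those of the form $[(w,1),\alpha]$, where $(w,1)$ sends $w$ to the unit and $\alpha$ is a cycle one of whose values contains $w$ as a $W$-letter; for such an $\alpha$ to be a $\D$-cycle, cancellation of its leading term forces $\tau(w)$ to be a coboundary, so $w \in \mathrm{ker}[D_W]$. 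Since $\alpha$ has positive degree and all generators lie in degrees $\le n$, necessarily $|w| < n$, which (i) forbids. A word-length and degree count disposes of the remaining commutators of cycles, so the binary bracket is trivial.

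Third --- and this is where I expect the real difficulty --- I upgrade ``trivial binary bracket'' to ``quasi-isomorphic to abelian,'' which requires the vanishing of all higher Lie--Massey products on $H_*(L)$. Here hypothesis (ii) enters: after applying $\CC^*$, the higher brackets among the generating cycles are detected by Massey products in the minimal model $\B\otimes\Lambda W, D$ of $E$, and the degree bookkeeping of the previous step confines any potentially non-trivial such product to degrees $< n$, where (ii) together with the DGMS criterion \cite{DGMS} forces it to vanish; the top-degree classes $(w_n,1)$ cannot enter a non-trivial higher bracket by the same argument that killed the binary ones. Thus $L$ is formal with trivial induced structure, i.e. quasi-isomorphic to an abelian DG Lie algebra, and Corollary \ref{cor}(B) yields that $\Baut_1(p)$ is a rational H-space. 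The main obstacle is precisely making rigorous the implication ``no Massey products of degree $< n$ in $E$'' $\Rightarrow$ ``no higher Lie--Massey products in $\der_\B(\B\otimes\Lambda W)$,'' since triviality of the binary bracket alone does not suffice. Finally, Example \ref{example2} is the special case in which $J = 0$ (so (i) holds in all degrees) and $E$ is formal (so (ii) holds vacuously).
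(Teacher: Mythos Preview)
Your strategy --- compute $H_*(L)$ first, then try to kill the induced binary bracket and all higher Lie--Massey operations --- is genuinely different from the paper's, and the obstacle you name in your third step is real: you offer no mechanism translating ``no Massey products in $H^*(E;\Q)$ of degree $<n$'' into ``no higher Lie--Massey products on $H_*(L)$,'' and there is no such translation available off the shelf. The paper bypasses this issue entirely by staying at the chain level. Because $J$ vanishes below degree $n$, one may choose the DGMS complement $V'$ with $W \subseteq V'$, so that $\Lambda C \subseteq \B$. Now take any $\D$-cycle $\theta$ and any $w \in W$ and split $\theta(w) = z + m$ with $z \in \Lambda C$ and $m \in I(V')$. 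Purity gives $\D\theta = D\circ\theta$, so $D(m)=0$; since $|m| = |w|-|\theta| < n$, hypothesis~(ii) forces $m = D(y)$ for some $y$. Replacing $\theta$ by $\theta - \D(w,y)$ removes the $I(V')$-part of $\theta(w)$ without disturbing $\theta$ on the other basis elements. Iterating, every $\D$-cycle is homologous to one with values in $\B$. Derivations with values in $\B$ commute (each kills $\B$), so $\Hom(W,\B)$ is an abelian sub-DG Lie algebra whose inclusion into $\dder_\B(\B\otimes\Lambda W)$ is a quasi-isomorphism, and Corollary~\ref{cor}(B) finishes. Notice that hypothesis~(ii) is used once, pointwise, to bound a specific $D$-cycle in the model of $E$; it is never asked to control any operation on $H_*(L)$.

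Your second step is also not airtight as written. From ``$\alpha$ is a $\D$-cycle and $\alpha(w')$ contains $b\,w$'' you get $D(\alpha(w'))=0$, which only says that $b\,\tau(w)$ is cancelled by the other $\B$-terms of $D(\alpha(w'))$; it does not force $\tau(w)$ itself to be a $d_B$-coboundary, so the conclusion $w\in\ker[D_W]$ does not follow. The chain-level normalization above is exactly what makes the vanishing of brackets --- binary and higher alike --- automatic, and it is both shorter and cleaner than the homological route you propose.
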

\begin{proof}
Since  $J$    vanishes in degrees $ < n,$ in the minimal Sullivan model $\B \otimes \Lambda W, D$ for $E$   we may take $W \subseteq V'$.      Suppose     $\theta$ is a cycle  in $\dder^q_\B(\B \otimes \Lambda W), \D$. Given $w \in W$, write  $\theta(w) =  z + m$ where $m \in I(V')$ and $z \in \Lambda C.$  Since $\theta$ is a $\D$-cycle, $D(m) = 0$.  Thus $m = D(y)$ for some $y \in \B \otimes \Lambda W$.    We then have $(\theta - \D(w, y))(w) \subseteq \B.$   Continuing in this way for each element in a basis for $W,$ we obtain that
$\theta$ is  homologous to a $\D$-cycle $\theta'$ satisfying $\theta'(W) \subseteq \B.$
It follows that the inclusion $$\lin(W, \B), 0 \hookrightarrow \dder_\B(\B \otimes \Lambda W), D$$ is a quasi-isomorphism.  Since $\lin(W, \B), 0$ is an abelian DG Lie algebra, the result follows from Corollary \ref{cor}.B. 
\end{proof}


\begin{thebibliography}{[GK]}
\bibliographystyle{amsalpha}

\bibitem{AB}
M.~F. Atiyah and R.~Bott, \emph{The {Y}ang-{M}ills equations over {R}iemann
  surfaces}, Philos. Trans. Roy. Soc. London Ser. A \textbf{308} (1983),
  no.~1505, 523--615. \MR{702806 (85k:14006)}

\bibitem{BFM}
Urtzi Buijs, Yves F{\'e}lix, and Aniceto Murillo, \emph{Lie models for the
  components of sections of a nilpotent fibration}, Trans. Amer. Math. Soc.
  \textbf{361} (2009), no.~10, 5601--5614. \MR{2515825 (2010e:55006)}
  
  \bibitem{BFM2}
\bysame, \emph{{$L_\infty$} models of based mapping spaces}, J. Math. Soc. Japan.
  \textbf{63} (2011), no.~2, 503--524. \MR{2793109}

\bibitem{BHMP}
Peter Booth, Philip Heath, Chris Morgan, and Renzo Piccinini,
  \emph{{$H$}-spaces of self-equivalences of fibrations and bundles}, Proc.
  London Math. Soc. (3) \textbf{49} (1984), no.~1, 111--127. \MR{743373
  (85k:55013)}

\bibitem{CS}
M.~C. Crabb and W.~A. Sutherland, \emph{Counting homotopy types of gauge
  groups}, Proc. London Math. Soc. (3) \textbf{81} (2000), no.~3, 747--768.
  \MR{1781154 (2001m:55024)}
\bibitem{DGMS}  Pierre Deligne, Phillip  Griffiths,  John Morgan and
         Dennis     Sullivan, \emph{Real homotopy theory of {K}\"ahler manifolds},
Invent. Math. \textbf{29} (1975), no. 3, 245--274. \MR{0382702 (52 \#3584)}

\bibitem{DL}
Albrecht Dold and Richard Lashof, \emph{Principal quasi-fibrations and fibre
  homotopy equivalence of bundles.}, Illinois J. Math. \textbf{3} (1959),
  285--305. \MR{0101521 (21 \#331)}

\bibitem{FHT}
Yves F{\'e}lix, Stephen Halperin, and Jean-Claude Thomas, \emph{Rational
  homotopy theory}, Graduate Texts in Mathematics, vol. 205, Springer-Verlag,
  New York, 2001. \MR{1802847 (2002d:55014)}

\bibitem{FLS}
Yves F{\'e}lix, Gregory Lupton, and Samuel~B. Smith, \emph{The rational
  homotopy type of the space of self-equivalences of a fibration}, Homology,
  Homotopy Appl. \textbf{12} (2010), no.~2, 371--400. \MR{2771595}


\bibitem{FO}
Yves F{\'e}lix and John Oprea, \emph{Rational homotopy of gauge groups}, Proc.
  Amer. Math. Soc. \textbf{137} (2009), no.~4, 1519--1527. \MR{2465678
  (2009h:55011)}

\bibitem{Gat} J.-B. Gatsinzi, \emph{The homotopy {L}ie algebra of classifying spaces},
J. Pure Appl. Alg. {\bf 120} (1997), 281-289. \MR{1468920 (98g:55015)}

\bibitem{Got}
Daniel~Henry Gottlieb, \emph{On fibre spaces and the evaluation map}, Ann. of
  Math. (2) \textbf{87} (1968), 42--55. \MR{0221508 (36 \#4560)}



\bibitem{Hal}
Stephen Halperin, \emph{Rational fibrations, minimal models, and fibrings of
  homogeneous spaces}, Trans. Amer. Math. Soc. \textbf{244} (1978), 199--224.
  \MR{0515558 (58 \#24264)}

\bibitem{HMR}
Peter Hilton, Guido Mislin, and Joe Roitberg, \emph{Localization of nilpotent
  groups and spaces}, North-Holland Publishing Co., Amsterdam, 1975,
  North-Holland Mathematics Studies, No. 15.  \MR{0478146 (57 \#17635)}



\bibitem{KSS}
John~R. Klein, Claude~L. Schochet, and Samuel~B. Smith, \emph{Continuous trace
  {$C\sp *$}-algebras, gauge groups and rationalization}, J. Topol. Anal.
  \textbf{1} (2009), no.~3, 261--288. \MR{2574026 (2010m:55011)}


\bibitem{K}
Akira Kono, \emph{A note on the homotopy type of certain gauge groups}, Proc.
  Roy. Soc. Edinburgh Sect. A \textbf{117} (1991), no.~3-4, 295--297.
  \MR{1103296 (92b:55005)}

  \bibitem{KT}
Akira Kono and Shuichi Tsukuda, \emph{4-manifolds {$X$} over {$B{\rm SU}(2)$} and the corresponding
  homotopy types {${\rm Map}(X,B{\rm SU}(2))$}}, J. Pure Appl. Algebra
  \textbf{151} (2000), no.~3, 227--237. \MR{1776430 (2001k:55020)}


\bibitem{May} Peter May, \emph{Classifying spaces and fibrations}, Mem. Amer. Math. Soc., 1975 \textbf{1}.
\MR{0370579 (51 \#6806)}



\bibitem{Sch}
Hans Scheerer, \emph{On rationalized {$H$}- and co-{$H$}-spaces. {W}ith an
  appendix on decomposable {$H$}- and co-{$H$}-spaces}, Manuscripta Math.
  \textbf{51} (1985), no.~1-3, 63--87. \MR{788673 (86m:55016)}

\bibitem{SS}
Claude~L. Schochet and Samuel~B. Smith, \emph{Localization of grouplike
  function and section spaces with compact domain}, Homotopy theory of function
  spaces and related topics, Contemp. Math., vol. 519, Amer. Math. Soc.,
  Providence, RI, 2010, pp.~189--202. \MR{2648714 (2011f:55023)}

\bibitem{ST}
H.~Shiga and M.~Tezuka, \emph{Rational fibrations, homogeneous spaces with
  positive {E}uler characteristics and {J}acobians}, Ann. Inst. Fourier
  (Grenoble) \textbf{37} (1987), no.~1, 81--106. \MR{894562 (89g:55019)}



\bibitem{Su}
Dennis Sullivan, \emph{Infinitesimal computations in topology}, Inst. Hautes
  \'Etudes Sci. Publ. Math. (1977), no.~47, 269--331 (1978). \MR{0646078 (58
  \#31119)}

\bibitem{Ta} Daniel Tanr\'{e}, \emph{ Homotopie rationelle{\em :} Mod\`{e}les 
de Chen, Quillen, Sullivan.} Lecture Notes in Mathematics  \textbf{1025}    Springer-Verlag  Berlin, 1983
\MR{764769 (86b:55010)}

\bibitem{Tsu}
Shuichi Tsukuda, \emph{A remark on the homotopy type of the classifying space
  of certain gauge groups}, J. Math. Kyoto Univ. \textbf{36} (1996), no.~1,
  123--128. \MR{1381543 (97h:55017)}

\end{thebibliography}
\end{document}